\newtheorem{theorem}{Theorem}[section]
\newtheorem{lemma}[theorem]{Lemma}
\theoremstyle{remark}
\newtheorem{remark}[theorem]{Remark}
\theoremstyle{definition}
\newtheorem{assumption}[theorem]{Assumption}
\newtheorem{definition}[theorem]{Definition}
\newcommand\cbrk{\text{$]$\kern-.15em$]$}}
\newcommand\opar{\text{\,\raise.2ex\hbox{${\scriptstyle|}$}\kern-.34em$($}}
\newcommand\cpar{\text{$)$\kern-.34em\raise.2ex\hbox{${\scriptstyle |}$}}\,}
\def\qed{{\hfill $\Box$ \bigskip}}
\def\XXint#1#2#3{{\setbox0=\hbox{$#1{#2#3}{\int}$}
\vcenter{\hbox{$#2#3$}}\kern-.5\wd0}}
\newcommand\bB{\mathbb{B}}
\newcommand\bL{\mathbb{L}}
\newcommand\bH{\mathbb{H}}
\newcommand\bZ{\mathbb{Z}}
\newcommand\bE{\mathbb{E}}
\newcommand\bN{\mathbb{N}}
\newcommand\fR{\mathbf{R}}
\newcommand\cB{\mathcal{B}}
\newcommand\cD{\mathcal{D}}
\newcommand\cF{\mathcal{F}}
\newcommand\cP{\mathcal{P}}
\newcommand\cS{\mathcal{S}}
\newcommand\cM{\mathcal{M}}
\newcommand\rF{\mathscr{F}}
\newcommand{\mysection}[1]{\section{#1}
\setcounter{equation}{0}}
\begin{document}

\title[SPDE with degenerate and unbounded coefficients]
{A sharp $L_p$-regularity result for  second-order stochastic partial differential equations with unbounded and fully degenerate leading coefficients}

\author{Ildoo Kim}
\address{Department of mathematics, Korea university, 1 anam-dong
sungbuk-gu, Seoul, south Korea 136-701}
\email{waldoo@korea.ac.kr}
\thanks{I. Kim has been supported by the National Research Foundation of Korea(NRF) grant funded by the Korea government(MSIP) (No. 2017R1C1B1002830).}

\author{Kyeong-Hun Kim}
\address{Department of mathematics, Korea university, 1 anam-dong
sungbuk-gu, Seoul, south Korea 136-701}
\email{kyeonghun@korea.ac.kr}

\thanks{}

\subjclass[2010]{60H15, 35R60, 	35B65}

\keywords{Degenerate stochastic partial differential equations,  Unbounded coefficients, Maximal $L_p$-regularity theory}

\begin{abstract}
  We present existence, uniqueness, and sharp regularity results of solution 
 to the stochastic partial differential equation (SPDE)
 \begin{align}
					\label{abs eqn}
 du=(a^{ij}(\omega,t)u_{x^ix^j}+f)dt + (\sigma^{ik}(\omega,t)u_{x^i}+g^k)dw^k_t, \quad u(0,x)=u_0,
 \end{align}
 where  $\{w^k_t:k=1,2,\cdots\}$ is a sequence of independent Brownian motions.  The coefficients  are merely measurable in $(\omega,t)$ and can be unbounded and  fully degenerate, that is, 
coefficients $a^{ij}$, $\sigma^{ik}$  merely satisfy 
 \begin{align}
					\label{abs only}
\left(\alpha^{ij}(\omega,t)\right)_{d\times d}:= \left(a^{ij}(\omega,t)-\frac{1}{2}\sum_{k=1}^{\infty} \sigma^{ik}(\omega,t)\sigma^{jk}(\omega,t)\right) \geq 0.
 \end{align}
In this article, we prove that there exists a unique solution $u$ to \eqref{abs eqn}, and 
\begin{align}
							\notag
\|u_{xx}\|_{\bH^\gamma_p(\tau,\delta)}   
&\leq N(d,p) \bigg(  \|u_0\|_{\bB_p^{\gamma+2 \left(1-1/ p \right)}}  +  \|  f\|_{\bH^\gamma_p( \tau,\delta^{1-p} )}   \\
								\label{abs est}
&\qquad \qquad+\|g_x\|^p_{\bH^\gamma_p( \tau, |\sigma|^p \delta^{1-p},l_2)}+ \|  g_x\|_{\bH^\gamma_p( \tau,\delta^{1-p/2},l_2)} \bigg),
\end{align}
where $p\geq 2$, $\gamma\in \fR$, $\tau$ is an arbitrary stopping time, $\delta(\omega, t)$ is the smallest eigenvalue of $\alpha^{ij}(\omega, t)$, $\bH_p^\gamma(\tau, \delta)$ is a weighted stochastic Sobolev space, and $\bB_p^{\gamma+2 \left(1-1/ p \right)}$ is a stochastic Besov space.
\end{abstract}

\maketitle

\mysection{introduction}

The second-order elliptic and parabolic partial differential equations (PDEs) with unbounded or degenerate leading coefficients have been widely studied for a long time (see .e.g. \cite{LL,O,O1,O2,O3}).   
Such equations  naturally arise   in the modeling of random phenomenon related to   diffusion.  For instance, consider the stochastic process $X_t$ governed by
$$
dX_t= b(\omega,t)dt+ \sigma(\omega,t)dB_t, \quad X_0=x,
$$
where $b(\omega,t)$ is $\fR^d$-valued, $\sigma(\omega,t)$ is $d\times d$-matrix-valued, and $B_t$ is $d$-dimensional Brownian motion. 
Then,  for any smooth function $f(x)$, $u(t,x):=\bE\left[ f(X_t) \right]$ satisfies the parabolic PDE
$$
u_t=\frac{1}{2}(\sigma \sigma^*)^{ij}u_{x^ix^j}+b^iu_{x^i}, \quad t>0; \quad u(0,\cdot)=f(x).
$$
Here $\sigma^*$ is the transpose of $\sigma$.  Since $(\sigma \sigma^*)$ is symmetric, it is only guaranteed that 
\begin{equation}
  \label{elliptic}
(a^{ij}(\omega,t)):=(\sigma \sigma^*) \geq 0.
\end{equation}
Such connections between PDEs and stochastic processes  illustrate  that boundedness and uniform ellipticity conditions of leading coefficients  are somewhat restrictive for the study of general PDEs (and SPDEs).

In this article we study  a weighted $L_p$-regularity theory ($p\geq 2$) of SPDE
\begin{align}
					\label{abs eqn-1}
 du=(a^{ij}(\omega,t)u_{x^ix^j}+f)dt + (\sigma^{ik}(\omega,t)u_{x^i}+g^k)dw^k_t, \quad u(0,x)=u_0,
 \end{align}
 where indices $i,j$ moves from $1$ to $d$, $k$ runs through $\{1,2,3,\cdots\}$. Einstein's summation convention with respect to repeated indices $i,j,k$ is assumed.
We assume very minimal conditions on the coefficients, that is,  the coefficients are merely measurable in $(\omega,t)$ and satisfy 
\begin{align}
					\label{abs only-1}
\left(\alpha^{ij}(\omega,t)\right):= \left(a^{ij}(\omega,t)-\frac{1}{2}\sum_{k=1}^{\infty} \sigma^{ik}(\omega,t)\sigma^{jk}(\omega,t)\right) \geq 0,
 \end{align}
 together with the local integrability
\begin{equation}
  \label{local}
\int^t_0 |a^{ij}(\omega,s)|ds +\int^t_0  \sum_{k=1}^\infty \left|\sigma^{ik}(\omega,s) \right|^2 ds<\infty, \quad (a.s.) \quad \forall \, t>0,~i,j.
\end{equation}
Actually  condition \eqref{local} is necessary  to make sense of equation \eqref{abs eqn-1}.

To the best of our knowledge, the theory of SPDE with  degenerate and unbounded leading coefficients  was initiated in \cite{krylov1986characteristics} and \cite{gyongy1990stochastic} respectively, and the result in \cite{krylov1986characteristics} was extended to the case of system in \cite{gerencser2015solvability}. 
Recently, this type SPDEs have been developed in various directions in $L_2$-spaces.
For instance, a regular strong solution to quasilinear degenerate SPDEs is studied in \cite{gess2017stochastic}
and the  existence of  an $L_2$-valued continuous solution to SPDEs with space-time dependent random coefficients $a^{ij}(t,x)$ which are allowed to  be both unbounded and degenerate is handled in \cite{zhang2011stochastic}.

However, 
 roughly speaking, if $p>2$ and the coefficients are degenerate then  the  results in the literature  (see e.g. \cite{krylov1986characteristics}) only say that  equation \eqref{abs eqn-1} has a unique  continuous $L_p$-valued solution $u$ and 
 \begin{align}
						\label{eqn 1}
 \bE \sup_{t\leq T} \|u\|^p_{L_p}\leq N(d,p,T) \left( \bE \|u_0\|^p_{L_p} + \|f\|^p_{\bL_p(T)}+\||g|_{l_2}\|^p_{\bL_p(T)} \right),
 \end{align}
 where $\bL_p(T)=L_p(\Omega\times [0,T], L_p(\fR^d))$. 
 Note that in  estimate \eqref{eqn 1} the solution $u$ is not smoother than $u_0, f$ and $g$. Actually \eqref{eqn 1}  is the best possible estimation in the
 extreme degenerate case, i.e. if  $a^{ij}(\omega,t)=0$ and $\sigma^{ik}(\omega,t)=0$ for all $i$, $j$, $k$, $\omega$, $t$. Because, in this case,  we have
$$
u(t,x) =u_0(x)+ \int_0^t f(s,x)ds + \int_0^t g^k(s,x)dw^k_s,
$$
and thus it  cannot be expected that the solution $u$ is smoother than data $u_0$, $f$, and $g$.
In other words, if degeneracy of diffusion is too strong, then there is no smoothing effect enough to make solutions regular than data.
However, if the matrix $(\alpha^{ij}(t))_{d\times d}$ in \eqref{abs only-1} is not identically zero then the question whether $u_{xx}\in L_p(\fR^d)$ on the set $\{(\omega,t): (\alpha^{ij})_{d\times d} >0\}$ naturally arises.   

It turns out the the answer to the above question is ``yes". In this article we prove that  under the conditions \eqref{abs only-1} and \eqref{local}, it holds that 
\begin{align}
							\notag
\bE \int^{\tau}_0 \|u_{xx}\|^p_{H^\gamma_p}\delta\,dt 
&\leq N(d,p) \bigg( \bE\|u_0\|^p_{B_p^{\gamma+2 \left(1-1/ p \right)}}  + \bE \int^{\tau}_0 \|f\|^p_{H^\gamma_p}\delta^{1-p}\,dt  \\
								\label{abs est-1}
&+\bE \int^{\tau}_0 \|g_x\|^p_{H^\gamma_p(l_2)} ( |\sigma|^p \delta^{1-p} + \delta^{1-p/2})dt \bigg),
\end{align}
where $p\geq 2$, $\gamma\in \fR$,  $\tau$ is an arbitrary stopping time, $\delta=\delta(\omega, t)$ is the smallest eigenvalue of $(\alpha^{ij}(\omega, t))$, $|\sigma(\omega,t)|=\max_{i}|\sigma^i|_{l_2}$, $H_p^\gamma$ is a  Sobolev space, and $B_p^{\gamma+2 \left(1-1/ p \right)}$ is a  Besov space.

We mention that our weights in   estimate \eqref{abs est-1} are not  in $A_p$-weight class which is a very important function class in the Fourier analysis (see Remark \ref{ap rem} below). 
Thus, even if the coefficients are not random, estimate \eqref{abs est-1} can not be obtained  based on the estimation of the sharp function $(u_{xx})^{\#}$ or Calder\'on-Zygmund approach. 
See e.g. \cite{Kr01, kim2016singular} for detail of such approachs. 

In summary, we list the novelty of our result:
\begin{enumerate}
\item Coefficients $a^{ij}(\omega, t)$ and $\delta^{ik}(\omega, t)$ are not necessarily bounded.

\item  The matrix $(\alpha^{ij})_{d\times d} (\omega,t):= \left(a^{ij}(\omega,t)-\frac{1}{2}\sum_{k=1}^{\infty} \sigma^{ik}(\omega,t)\sigma^{jk}(\omega,t)\right) $ can be  (fully) degenerate.

\item  Coefficients $a^{ij}(\omega, t)$ and $\delta^{ik}(\omega, t)$ can be random, and merely measurable in $(\omega,t)$.

\item  A sharp weighted $L_p$-regularity result is obtained   for any $p\geq 2$.


\end{enumerate}

\smallskip

This article is organized as follows. In 
Section 2, we introduce our  main result  together with some related function spaces. 
In Section 3, we prove the solvability and a priori  estimate for deterministic equations without boundedness and ellipticity conditions on the leading coefficients. 
In Section 4, stochastic PDEs with additive noises  are treated, and finally  the proof of the main theorem is given  in Section 5.

\smallskip

We finish the introduction with  notation used in the article. 
\begin{itemize}
\item $\bN$ and $\bZ$ denote the natural number system and the integer number system, respectively.
As usual $\fR^{d}$
stands for the Euclidean space of points $x=(x^{1},...,x^{d})$,
$\fR^d_+:=\{x=(x^1,\cdots,x^d)\in \fR^d: x^1>0\}$ and
$B_r(x):=\{y\in \fR^d: |x-y|<r\}$.
 For $i=1,...,d$, multi-indices $\alpha=(\alpha_{1},...,\alpha_{d})$,
$\alpha_{i}\in\{0,1,2,...\}$, and functions $u(x)$ we set
$$
u_{x^{i}}=\frac{\partial u}{\partial x^{i}}=D_{i}u,\quad
D^{\alpha}u=D_{1}^{\alpha_{1}}\cdot...\cdot D^{\alpha_{d}}_{d}u,
\quad  \nabla u=(u_{x^1}, u_{x^2}, \cdots, u_{x^d}).
$$
We also use the notation $D^m$ for a partial derivative of order $m$ with respect to $x$.

\item $C^\infty(\fR^d)$ denotes the space of infinitely differentiable functions on $\fR^d$. 
$\cS(\fR^d)$ is the Schwartz space consisting of infinitely differentiable and rapidly decreasing functions on $\fR^d$.
By $C_c^\infty(\fR^d)$,  we denote the subspace of $C^\infty(\fR^d)$ consisting of functions with compact support.

\item For $p \in [1,\infty)$, a normed space $F$ 
and a  measure space $(X,\mathcal{M},\mu)$, $L_{p}(X,\cM,\mu;F)$
denotes the space of all $F$-valued $\mathcal{M}^{\mu}$-measurable functions
$u$ so that
\[
\left\Vert u\right\Vert _{L_{p}(X,\cM,\mu;F)}:=\left(\int_{X}\left\Vert u(x)\right\Vert _{F}^{p}\mu(dx)\right)^{1/p}<\infty,
\]
where $\mathcal{M}^{\mu}$ denotes the completion of $\cM$ with respect to the measure $\mu$. 
We write $u \in L_{\infty}(X,\cM,\mu;F)$ iff
$$
\sup_{x}|u(x)| := \|u\|_{L_{\infty}(X,\cM,\mu;F)} 
:= \inf\left\{ \nu \geq 0 : \mu( \{ x: \|u(x)\|_F > \nu\})=0\right\} <\infty.
$$
If there is no confusion for the given measure and $\sigma$-algebra, we usually omit the measure and the $\sigma$-algebra. 
Moreover, if a topology is given on $X$, then the subspace of all continuous functions in $ L_{\infty}(X,\cM,\mu;F)$ is denoted by
$C(X;F)$.

\item  For functions  depending on $\omega$, $t$,  and $x$, the random parameter $\omega \in \Omega$ is  usually omitted. 

\item By $\cF$ and $\cF^{-1}$ we denote the d-dimensional Fourier transform and the inverse Fourier transform, respectively. That is,
$\cF[f](\xi) := \int_{\fR^{d}} e^{-i x \cdot \xi} f(x) dx$ and $\cF^{-1}[f](x) := \frac{1}{(2\pi)^d}\int_{\fR^{d}} e^{ i\xi \cdot x} f(\xi) d\xi$.

\item 
If we write $N=N(a,b,\cdots)$, this means that the
constant $N$ depends only on $a,b,\cdots$. 
\end{itemize}




\mysection{Setting and main results}

Let $(\Omega,\rF,P)$ be a complete probability space,
$\{\rF_{t},t\geq0\}$ be an increasing filtration of
$\sigma$-fields $\rF_{t}\subset\rF$, each of which contains all
$(\rF,P)$-null sets. By  $\cP$ we denote the predictable
$\sigma$-algebra generated by $\{\rF_{t},t\geq0\}$ and  we assume that
on $\Omega$ there exist  independent one-dimensional Wiener
processes $w^{1}_{t},w^{2}_{t},...$, each of which is a Wiener
process relative to $\{\rF_{t},t\geq0\}$.

We study the following initial value problem on $\fR^d$:
\begin{align}
				\label{main eqn}
				 du= \left( a^{ij}(t)u_{x^ix^j}+ f  \right)dt + \left(\sigma^{ik}(t)u_{x^i}+g^k\right)dw^k_t, \quad t>0; \quad u(0,\cdot)=u_0. 
\end{align}
As mentioned in the introduction,  Einstein's summation convention with respect to indices $i,j,k$ is assumed and  the argument $\omega$ is omitted  in the above equation for the simplicity of notation.

First, we introduce some deterministic function spaces related to our results. For $p >1$ and $\gamma \in \fR$, let
$H_{p}^{\gamma}=H_{p}^{\gamma}(\fR^{d})$ denote  the class of all
(tempered) distributions $u$  on $\fR^{d}$ such that
\begin{equation}
        \label{eqn norm}
\| u\| _{H_{p}^{\gamma}}:=\|(1-\Delta)^{\gamma/2}u\|_{L_{p}}<\infty,
\end{equation}
where
$$
(1-\Delta)^{\gamma/2} u = \cF^{-1} \left[(1+|\xi|^2)^{\gamma/2}\cF [u] \right].
$$
It is well-known that if $\gamma=1,2,\cdots$, then
$$
H^{\gamma}_p=W^{\gamma}_p:=\{u: D^{\alpha}_x u\in L_p(\fR^d), \, \,\,|\alpha|\leq \gamma\}, \quad \quad H^{-\gamma}_p=\left(H^{\gamma}_{p/{(p-1)}}\right)^*,
$$
where $\left(H^{\gamma}_{p/{(p-1)}}\right)^*$ is the dual space of $H^{\gamma}_{p/{(p-1)}}$.
For  a  tempered distribution $u\in H^{\gamma}_p$ and $\phi\in
\cS(\fR^d)$, the action of $u$ on $\phi$ (or the image of $\phi$
under $u$) is defined as
$$(u,\phi)=\left((1-\Delta)^{\gamma/2}u ,
(1-\Delta)^{-\gamma/2}\phi \right)=\int_{\fR^d}
(1-\Delta)^{\gamma/2}u(x) \cdot (1-\Delta)^{-\gamma/2}\phi(x) \,dx.
$$
Let  $l_2$ denote the set of all sequences $a=(a^1,a^2,\cdots)$ such that
$$|a|_{l_{2}}:= \left(\sum_{k=1}^{\infty}|a^{k}|^{2}\right)^{1/2}<\infty.
$$
By $H_{p}^{\gamma}(l_{2})=H_{p}^{\gamma}(\fR^d ; l_2)$  we denote the class of all $l_2$-valued
(tempered) distributions $v=(v^1,v^2,\cdots)$ on $\fR^{d}$ such that
$$
\|v\|_{H_{p}^{\gamma}(l_{2})}:=\||(1-\Delta)^{\gamma/2}v|_{l_2}\|_{L_{p}}<\infty.
$$
In particular, we set
$$
L_p:=H^0_p  \quad \text{and} \quad L_p(l_2):= H^0_p(l_2).
$$

To state our assumption for the initial data, we introduce the Besov space characterized by the Littlewood-Paley operator. See \cite[Chapter 6]{bergh1976interpolation} or 
 \cite[Chapter 6]{grafakos2009modern} for more details.  Let $\Psi$ be a 
  nonnegative function on $\fR^d$ so that $\hat \Psi \in C_c^\infty \left( B_{2}(0)  \setminus B_{1/2}(0)\right)$
and
\begin{align}
                    \label{sum 1}
 \quad \sum_{j \in \bZ} \hat \Psi (2^{-j} \xi) = 1, \quad \forall \xi \in \fR^d \setminus \{0\},
\end{align}
where $B_r(0) := \{ x \in \fR^d : |x| \leq r\}$ and $\hat \Psi$ is the Fourier transform of $\Psi$. 
For a tempered distribution $u$, we define
\begin{align}
				\label{del j}
\Delta_j u(x):=\Delta_j^\Psi u(x):= \cF^{-1} \left[ \hat \Psi (2^{-j} \xi) \cF u (\xi) \right] (x)
\end{align}
and
$$
S_0(u)(x) = \sum_{j=-\infty}^0 \Delta_j u (x),
$$
where the convergence is understood in the sense of distributions.
Due to \eqref{sum 1},
\begin{align}
                    \label{little iden}
u(x)= S_0(u)(x) + \sum_{j=1}^\infty \Delta_j u(x).
\end{align}
The Besov space $B^\gamma_p$ =$B^\gamma_p(\fR^d)$ with the order $\gamma$ and the exponent $p$  is  the space of all tempered distributions $u$ such that
\begin{equation}
    \label{Besov}
\|u\|_{B_p^\gamma}:=\| S_0(u) \|_{L_p} + \left(\sum_{j=1}^\infty 2^{\gamma pj} \| \Delta_j u \|_{L_p}^p \right)^{1/p} < \infty.
\end{equation}
Similarly,  the homogeneous Besov space $\dot{B}^\gamma_p$ =$\dot{B}^\gamma_p(\fR^d)$ with the order $\gamma$ and the exponent $p$  is  the space of all tempered distributions $u$ such that
\begin{equation*}
\|u\|_{\dot{B}_p^\gamma}:= \left(\sum_{j=-\infty}^\infty 2^{\gamma pj} \| \Delta_j u \|_{L_p}^p \right)^{1/p} < \infty.
\end{equation*}

\begin{remark} 
					\label{rem homo}
The followings  are well-known (cf. \cite{bergh1976interpolation,grafakos2009modern}).

(i)  Let $1<p<\infty$ and $\gamma>0$. Then, two norms $\|\cdot \|_{B_p^\gamma}$ and $\|\cdot \|_{\dot B_p^\gamma}+\|\cdot\|_{L_p}$ are equivalent, and  for any $c>0$,  we have
$\|u(cx)\|_{\dot B_p^\gamma}=c^{\gamma-d/p}\|u\|_{\dot B_p^\gamma}$.

(ii) Let $p\geq 2$. Then $B^{\gamma}_p \supset H^{\gamma}_p$ and $B^{\gamma}_p \subset H^{\gamma'}_p$ for any $\gamma'<\gamma$.

\end{remark}

Next, we introduce stochastic Banach spaces.  Denote
$$
\bB^{\gamma}_p :=L_p(\Omega, \rF_0; B^{\gamma}_p),
\quad \dot \bB^{\gamma}_p :=L_p(\Omega, \rF_0; \dot B^{\gamma}_p),
\quad \bH^{\gamma}_p :=L_p(\Omega, \rF_0; H^{\gamma}_p),
$$
and for a stopping time $\tau$ and  weight function $\delta=\delta(\omega,t) \geq 0$, 
denote
$$
\bH^{\gamma}_{p}(\tau,\delta)=L_p(\Omega\times [0,\tau], dP\times \delta(t)dt, \cP ; H^{\gamma}_p), \quad \bH^{\gamma}_{p}(\tau):=\bH^{\gamma}_{p}(\tau,1)$$
and
$$
\bH^{\gamma}_{p}(\tau,\delta,l_2)=L_p(\Omega\times [0,\tau], dP\times \delta(t)dt, \cP ; H^{\gamma}_p(l_2)), \quad 
\bH^{\gamma}_{p}(\tau,l_2):= \bH^{\gamma}_{p}(\tau,1,l_2).$$
For the notational convenience, we use $\bL_p$  instead of $\bH^0_{p}$.
We write 
$$
u \in L_p \left( \Omega, \rF; C\left( [0,\tau]; H^\gamma_p \right) \right)
$$ 
if $u$ is an $H^\gamma_p$-valued predictable process such that $u (\omega)\in C\left( [0,\tau(\omega)] ;H_p^\gamma \right)$ (a.s.),
and
$$
\bE  \sup_{t \leq \tau} \|u\|^p_{H_p^\gamma} <\infty.
$$
\begin{remark}
It is easy to check that  if $\tau$ is bounded, then  $L_p \left( \Omega, \rF ; C\left( [0,\tau] ; H^\gamma_p \right) \right)$ is a Banach space. 
\end{remark}

Let $\cD$ be the space of distributions (generalized functions) on $C_c^\infty(\fR^d)$, 
and let $\cD(l_2)$ denote the space of  $l_2$-valued distributions (generalized functions) on $C_c^\infty(\fR^d)$.

\begin{definition}
					\label{def sol-2}
Let $u_0$ be $\cD$-valued random variable, $u$ and $f$ be $\cD$-valued predictable stochastic processes, and $g$ be $\cD(l_2)$-valued predictable stochastic process. 
We say that $u$ satisfies (or is a solution to) the equation
\begin{align}
						\notag
&du(t,x)=f(t,x) dt +g(t,x) dw^k_t, \quad (t,x) \in [0,\tau]  \times \fR^d \\
						 \label{eqn dist}
&u(0,\cdot)=u_0
\end{align}
in the sense of distributions if   for any $\phi\in C^{\infty}_0(\fR^d)$, the  equality
\begin{align}
					\label{def eq 2}
(u(t,\cdot),\phi)=   (u_0 ,\phi) + \int^t_0 (f(s,\cdot),\phi)ds+\sum_k \int^t_0 (g^k(s,\cdot),\phi)dw^k_t
\end{align}
holds for all $t \leq \tau$ (a.s.).  

In particular, if $u_0\in \bL_p$, $u,f\in \bL_p(\tau)$, and $g \in \bL_p(\tau,l_2)$, then  $u$ is a  solution to \eqref{main eqn} if  
\begin{eqnarray*}
						\label{def sol}
(u(t,\cdot),\phi)
&=&   (u_0 ,\phi) + \int^t_0  \left[ \left(  a^{ij}(t)u(s,\cdot), \phi_{x^ix^j} \right) + (f(u(s,\cdot),s,\cdot),\phi) \right]ds \\
&& +\sum_k \int^t_0 \left[ - \left(\sigma^{ik}(t)u(s,\cdot), \phi_{x^i} \right) + (g^k(u(s,\cdot),s,\cdot),\phi)   \right]dw^k_t
\end{eqnarray*}
for all $t \in [0,\tau]$ $(a.s.)$.

\end{definition}

\begin{remark}
					\label{moli rem}
Suppose that  $u_0\in \bL_p$, $u,f\in \bL_p(\tau)$, and $g \in \bL_p(\tau,l_2)$. In Definition \ref{def sol-2}, the subset $\Omega' \subset \Omega$ such that $P(\Omega')=1$  and \eqref{def eq 2} holds for all $(\omega', t) \in \Omega' \times [0,\tau(\omega')]$
depends  on the test function $\phi$.
However, taking the countable dense subset of $C_c^\infty(\fR^d)$ in $L_q(\fR^d)$ with $q= \frac{p}{p-1}$, one can find a $\Omega' \subset \Omega$ such that 
$P(\Omega')=1$  and \eqref{def eq 2} holds for all $(\omega', t) \in \Omega' \times [0,\tau(\omega')]$ and $\phi \in C_c^\infty(\fR^d)$. 

Due to the above fact, one can use Sobolev's mollifier to approximate $u$ with smooth functions as in the deterministic case. Indeed, 
let $\phi \in C_c^\infty(\fR^d)$ have a unit integral,  and denote
$\phi^{\varepsilon}(x) = \frac{1}{\varepsilon^{d}} \phi(x/\varepsilon)$. 
Plugging in $\phi^\varepsilon(x- \cdot)$ in \eqref{def eq 2} in place of $\phi$, we get
\begin{align*}
u^\varepsilon(t,x)=u^{\varepsilon}_0+\int^t_0f^\varepsilon(s,x)ds+\sum_k \int^t_0g^{k,\varepsilon}(s,x)dw^k_t
\end{align*}
for all $t \leq \tau$, $x \in \fR^d$,  $(a.s.)$, where
$$
u^{\varepsilon}(t,x) = \int_{\fR^d} u(t,y) \phi^\varepsilon (x-y) dy, \quad u^{\varepsilon}_0(x)=\int_{\fR^d} u_0(y) \phi^\varepsilon (x-y) dy
$$
and
$$
f^{\varepsilon}(t,x) = \int_{\fR^d} f(t,y) \phi^\varepsilon (x-y) dy, \quad g^{k,\varepsilon}(t,x) = \int_{\fR^d} g^k(t,y) \phi^\varepsilon (x-y) dy.
$$
\smallskip

\end{remark}

Now we introduce our assumptions on  the coefficients $a^{ij}(t)$ and $\sigma^{ik}(t)$. 
Set
$$
\alpha^{ij}(t) :=a^{ij}(t)-\frac{1}{2}(\sigma^i(t),\sigma^j(t))_{l_2} 
$$
and
$$
|\sigma(t)|=\max_{i=1,\cdots,d}|\sigma^i(t)|_{l_2}.
$$
\begin{assumption}
					\label{co as}
\begin{enumerate}[(i)]
\item The coefficients $a^{ij}(t)$, $\sigma^{ik}(t)$  are predictable  for all $i$, $j$, $k$, and
 $$
 \alpha^{ij}(t) \xi^i \xi^j \geq 0, \quad \forall  (\omega, t,\xi) \in  \Omega \times (0,\infty) \times \fR^d.
 $$

\item The coefficients $a^{ij}(t)$, $|\sigma^{ik}(t)|^2$  are  locally  integrable, i.e.
\begin{align}
						\label{2019011510}
\int_0^t  |a^{ij}(s)| ds 
+\int_0^t  \sum_{k=1}^\infty \left|\sigma^{ik}(s) \right|^2 ds < \infty ~(a.s.) \qquad \forall t>0, i,j
\end{align}
\end{enumerate}
\end{assumption}

\begin{remark}
(i)  Obviously, Assumption \ref{co as} allows the coefficients to be unbounded or degenerate.
\smallskip

(ii) Without loss of generality, we 
 may assume that the coefficients $a^{ij}(t)$ and $\alpha^{ij}(t)$ are symmetric, i.e.
$$
a^{ij}(t)=a^{ji}(t) \quad \text{and} \quad \alpha^{ij}(t) = \alpha^{ji}(t) \qquad \forall i,j.
$$
Thus if we denote  by $\delta(t)$ the smallest eigenvalue of the matrix $(\alpha^{ij}(t))$, then Assumption \ref{co as} implies 
 \begin{align}
					\label{2019032701}
 \alpha^{ij}(t) \xi^i \xi^j \geq  \delta(t) |\xi|^2 \geq 0 \quad \forall  (\omega, t,\xi) \in  \Omega \times (0,\infty) \times \fR^d.
 \end{align}

\end{remark}

Here is the main result of this article.
\begin{theorem}
							\label{main thm}
Let $p \in [2, \infty)$, $T \in [0,\infty)$, $\delta(t)$ be the smallest eigenvalue of $\alpha^{ij}(t)$,
$\tau\leq T$ be a stopping time, $\gamma \in \fR$, $u_0 \in \bB_p^{\gamma+2(1-1/p)}$, $f \in \bH^\gamma_p \left(\tau\right) \cap \bH^\gamma_p \left(\tau,\delta^{1-p}\right)$, 
and $g \in \bH^{\gamma }_p \left(\tau, l_2\right) \cap \bH^{\gamma+1}_p( \tau, \delta^{1-p/2},l_2)$.
Suppose that  Assumption \ref{co as} holds and 
$$
g_x \in \bH_p^{\gamma}( \tau, |  \sigma|^p ,l_2) \cap \bH_p^{\gamma}( \tau, |\sigma|^p \delta^{1-p},l_2).
$$
Then    there exists a unique solution 
$u \in L_p \left( \Omega, \rF; C\left( [0,\tau] ; H^\gamma_p \right) \right) $ to \eqref{main eqn}, and for this solution we have
\begin{align}
						\label{2019011031}
\bE \sup_{t  \in [0,\tau] } \|u(t,\cdot)\|^p_{H^\gamma_p}
\leq N_1 \left(
\|u_0\|^p_{\bH^\gamma_p} 
+\|f\|^p_{\bH^\gamma_p(\tau)} 
+\|g\|^p_{\bH^\gamma_p(\tau)} +\|g_x\|^p_{\bH^\gamma_p( \tau, |\sigma|^p,l_2)} \right),
\end{align}
and
\begin{eqnarray}
							\notag
\|u_{xx}\|_{\bH^\gamma_p(\tau,\delta)}   &\leq& N_2 \Big(
									\label{2019011320}
 \|u_0\|_{\bB_p^{\gamma+2 \left(1-1/ p \right)}}  +  \|  f\|_{\bH^\gamma_p( \tau,\delta^{1-p} )} \\
 && \quad +\|g_x\|_{\bH^\gamma_p( \tau, |\sigma|^p \delta^{1-p},l_2)}+ \|  g_x\|_{\bH^\gamma_p( \tau,\delta^{1-p/2},l_2)} \Big),
\end{eqnarray}
where  $N_1= N_1(p,T)$ and $N_2=N_2(d,p)$.
\end{theorem}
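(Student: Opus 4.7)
My strategy is to remove the first-order stochastic derivative $\sigma^{ik}u_{x^i}\,dw^k_t$ by a random translation. Setting $Y_t^i:=\int_0^t\sigma^{ik}(s)\,dw^k_s$ (well defined a.s.\ thanks to \eqref{2019011510}) and $v(t,y):=u(t,y-Y_t)$, I apply the It\^o--Wentzell formula. With $dX_t^i=-\sigma^{ik}\,dw^k_t$ and $d\langle X^i,X^j\rangle_t=\sum_k\sigma^{ik}\sigma^{jk}\,dt$, the martingale part of $du|_{X_t}$ cancels against $u_{x^i}\,dX_t^i$ (leaving only $g^k(t,y-Y_t)\,dw^k_t$), while in the drift the Wentzell cross-term $G^k_{x^i}c^{ik}\,dt$ combines with $\tfrac12 u_{x^ix^j}\,d\langle X^i,X^j\rangle$ and $a^{ij}u_{x^ix^j}$ to produce exactly $\alpha^{ij}(t)v_{y^iy^j}$. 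The result is the additive-noise equation
\begin{equation*}
dv(t,y)=\bigl(\alpha^{ij}(t)v_{y^iy^j}+\tilde f(t,y)\bigr)\,dt+\tilde g^k(t,y)\,dw^k_t,\qquad v(0,\cdot)=u_0,
\end{equation*}
where $\tilde f(t,y):=f(t,y-Y_t)-\sigma^{ik}(t)g^k_{x^i}(t,y-Y_t)$ and $\tilde g^k(t,y):=g^k(t,y-Y_t)$. The leftover drift $\sigma^{ik}g^k_{x^i}$ is precisely what forces the weighted quantity $\|g_x\|_{\bH^\gamma_p(\tau,|\sigma|^p\delta^{1-p},l_2)}$ into \eqref{2019011320}, since the pointwise bound $\|\sigma^{ik}g^k_{x^i}\|_{H^\gamma_p}\le |\sigma(t)|\,\||g_x|_{l_2}\|_{H^\gamma_p}$ converts a $\delta^{1-p}$-weighted norm of $\sigma g_x$ into a $|\sigma|^p\delta^{1-p}$-weighted norm of $g_x$.

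Next I decompose $v=v_1+v_2$, where $v_1$ solves $dv_1=(\alpha^{ij}v_{1,y^iy^j}+\tilde f)\,dt$ with $v_1(0,\cdot)=u_0$, and $v_2$ solves $dv_2=\alpha^{ij}v_{2,y^iy^j}\,dt+\tilde g^k\,dw^k_t$ with $v_2(0,\cdot)=0$. These are exactly the two auxiliary problems handled in Sections~3 and~4: Section~3 will supply the sharp weighted bound $\|v_{1,yy}\|_{\bH^\gamma_p(\tau,\delta)}\le N\bigl(\|u_0\|_{\bB_p^{\gamma+2(1-1/p)}}+\|\tilde f\|_{\bH^\gamma_p(\tau,\delta^{1-p})}\bigr)$ for the deterministic-type random-coefficient PDE, and Section~4 will supply $\|v_{2,yy}\|_{\bH^\gamma_p(\tau,\delta)}\le N\|\tilde g_y\|_{\bH^\gamma_p(\tau,\delta^{1-p/2},l_2)}$ for the additive-noise equation. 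These are the origins of the $\delta^{1-p}$ and $\delta^{1-p/2}$ weights in \eqref{2019011320}. Since the map $y\mapsto y-Y_t(\omega)$ is, for each $(\omega,t)$, an isometry of both $H^\gamma_p(\fR^d)$ and $B^\gamma_p(\fR^d)$, every norm of $v,\tilde f,\tilde g$ equals the corresponding norm of $u,f-\sigma g_x,g$ (with the same weight). Summing the two bounds and transforming back yields \eqref{2019011320}. Estimate \eqref{2019011031} then follows from the BDG inequality applied to the stochastic integral representation of $u$ combined with Gronwall, once $u$ is known to belong to $\bH^\gamma_p(\tau)$. Uniqueness is inherited by difference: if $u$ solves \eqref{main eqn} with zero data, then $v$ solves the split system with zero data, forcing $v_1=v_2=0$ by the a priori estimates, hence $u=0$.

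The principal difficulty is the rigorous justification of the It\^o--Wentzell step. Because $u$ is only a distribution-valued predictable process and $a^{ij},\sigma^{ik}$ can be unbounded, one cannot apply the formula pointwise. The standard fix, foreshadowed in Remark~\ref{moli rem}, is to mollify in $x$ to obtain smooth approximants $u^\varepsilon:=u*\phi^\varepsilon$ which satisfy a classical It\^o decomposition in $x$, apply It\^o--Wentzell to each $u^\varepsilon(t,y-Y_t)$, and then pass $\varepsilon\downarrow 0$ using the a priori bounds from Sections~3--4. An additional technical localization is needed to handle the unboundedness of the coefficients: one works on the stopping times $\tau_n:=\tau\wedge\inf\{t:\int_0^t|\sigma(s)|^2\,ds+\sum_{i,j}\int_0^t|a^{ij}(s)|\,ds\ge n\}$, on which $Y_t$ has bounded quadratic variation and the It\^o--Wentzell calculation is routine, and then sends $n\to\infty$ using \eqref{2019011510} (which gives $\tau_n\uparrow\tau$ a.s.) together with the $n$-uniform estimates already established.
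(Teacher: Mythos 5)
Your proposal follows essentially the same route as the paper: the random translation $v(t,x)=u(t,x-x_t)$, $x^i_t=\int_0^t\sigma^{ik}(s)\,dw^k_s$, justified by the It\^o--Wentzell formula, reduces \eqref{main eqn} to the additive-noise equation with drift $\alpha^{ij}v_{x^ix^j}+f(t,x-x_t)-\sigma^{ik}g^k_{x^i}(t,x-x_t)$, and the unweighted and weighted estimates for that equation then give \eqref{2019011031} and \eqref{2019011320} after translating back. The only cosmetic deviations are your extra splitting $v=v_1+v_2$ and the attribution of the weighted bound for $v_1$ to Section 3 (which in fact contains no weighted estimates); in the paper both weighted bounds are obtained at once from Lemma \ref{2019011301}, applied together with Theorem \ref{thm 20181222} directly to the transformed equation.
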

The proof of this theorem is given in Section \ref{pf main thm}.

\begin{remark}
						\label{ap rem}
(i) A nonnegative function $w(x)$ is said to be of class $A_p$ if
\begin{align*}
\sup \left( \frac{1}{|Q|} \int_{Q} w(x) dx \right) \left( \frac{1}{|Q|} \int_{Q} w(x)^{-1/(p-1)} dx \right)^{p-1} <\infty,
\end{align*}
where the sup is taken over all cubes on $\fR^{d}$ (cf. \cite[Section 7.1]{grafakos2014classical}).
Note that, due to the term $\frac{1}{|Q|} \int_{Q} w(x)^{-1/(p-1)} dx$, it is required that
$$
w(x)  > 0 \qquad (a.e.).
$$
However since our coefficients $a^{ij}(t)$ and $\sigma^{ik}(t)$ can be degenerate on sets with positive measures, our weights  are generally not  in $A_p$-class, which makes us unable to use  $A_p$-weight theories from the Fourier analysis.
\smallskip

(ii)    Suppose that $a^{ij}$ and $|\sigma^{i}|_{l_2}$ are bounded.   Then, since $\delta$ is also bounded,  the conditions for $f$ and $g$ in Theorem \ref{main thm} are as follows: 
$$
f\in \bH^\gamma_p \left(\tau\right) \cap \bH^\gamma_p \left(\tau,\delta^{1-p}\right) = \bH^\gamma_p \left(\tau, \delta^{1-p}\right) ,
 $$
$$
g\in  \bH^{\gamma}_p \left(\tau\right) \cap \bH^{\gamma+1}_p( \tau, \delta^{1-p/2},l_2) = \bH^{\gamma}_p \left(\tau, \delta^{1-p/2},l_2\right) ,
 $$
and
$$
g_x \in \bH_p^{\gamma}( \tau, |\sigma|^p ,l_2) \cap \bH_p^{\gamma}(  \tau,  |\sigma|^p \delta^{1-p} ,l_2)    \supset  \bH_p^{\gamma}( \tau,l_2)  \cap \bH_p^{\gamma}( \tau,\delta^{1-p} ,l_2) =  \bH_p^{\gamma}( \tau,\delta^{1-p},l_2)   .
$$
(iii) If  the matrix $(\alpha^{ij}(t))$  is uniformly elliptic,  that is,  there exists a positive constant $\varepsilon>0$ such that $\delta(t)\geq \varepsilon$, then   in Theorem \ref{main thm} it is assumed that 
$$
 f \in\bH^\gamma_p \left(\tau\right) \cap \bH^\gamma_p \left(\tau,\delta^{1-p}\right) = \bH^\gamma_p \left(\tau\right) ,
 $$
$$
 g \in \bH^{\gamma}_p \left(\tau , l_2\right) \cap \bH^{\gamma+1}_p( \tau, \delta^{1-p/2},l_2) = \bH^{\gamma}_p \left(\tau , l_2 \right) ,
 $$
and
$$
g_x \in \bH_p^{\gamma}( \tau, |\sigma|^p, l_2) \cap \bH_p^{\gamma}(\tau, |\sigma|^p \delta^{1-p},l_2)   = \bH_p^{\gamma}( \tau, |\sigma|^p ,l_2) .
$$
Furthermore, if $\delta(t)\geq \varepsilon$ and $|\sigma|_{l_2}$ is bounded,   then our data spaces are given by
$$
 f \in\bH^\gamma_p \left(\tau\right) \cap \bH^\gamma_p \left(\tau,\delta^{1-p}\right) = \bH^\gamma_p \left(\tau\right) ,
 $$
$$
 g \in \bH^{\gamma}_p \left(\tau, l_2\right) \cap \bH^{\gamma}_p( \tau, \delta^{1-p/2},l_2) = \bH^{\gamma}_p \left(\tau\right) ,
 $$
 $$
g_x   \in \bH_p^{\gamma}( \tau, |\sigma|^p ,l_2) \cap \bH_p^{\gamma}(\tau, |\sigma|^p \delta^{1-p} ,l_2)   \supset \bH_p^{\gamma}(\tau, \delta^{1-p} ,l_2)  =  \bH_p^{\gamma}( \tau,l_2) .
$$
Therefore our data spaces for  $f$, $g$ obviously include the classical data spaces (cf. \cite{Krylov1999}). 
\smallskip

(iv) If $p=2$, then $1-p/2=0$. Thus $\delta(t)^{1-p/2}$ is not well-defined if  $\delta(t)=0$. 
In this case, we define $\delta(t)^{1-p/2}=1$. 

\smallskip

(v) We chose the smallest eigenvalue $\delta(t)$ of $(a^{ij}(t))$ as the weight in our results. 
However, it is possible that Theorem \ref{main thm} holds with  any function $\delta(t)$ satisfying \eqref{2019032701} in place of the smallest eigenvalues. 
\end{remark}

\mysection{Deterministic  linear equations}

In this section, we consider the following deterministic  equation on $\fR^d$:
\begin{align}				
					\label{2018122201}
 \frac{du}{dt} =  a^{ij}(t)u_{x^ix^j}+f, \quad t\in (0,T]\,; \quad u(0,\cdot)=u_0.
\end{align}
The coefficients $a^{ij}$ depend only on $t$. We say that $u$ is a (weak) solution to \eqref{2018122201}  if  \eqref{2018122201} holds in the sense of distributions, that is, for any $\phi\in C^{\infty}_c(\fR^d)$ the equality 
  \begin{align}
							\notag
&\int_{\fR^d}u(t,x)\phi(x) dx  \\
								\label{2018011010}
&= \int_{\fR^d} u_0(x) \phi(x)dx +  \int_0^t \int_{\fR^d} \left(  a^{ij}(s)u(s,x) \phi_{x^ix^j}(x)+f(t,x) \phi(x)  \right) dx ds
\end{align}
holds for all $t\leq T$.

Here we assume
 \begin{align}
							\label{2018122202}
a^{ij}(t) \xi^i \xi^j  \geq 0,\quad \forall  ( t,\xi) \in  (0,\infty) \times \fR^d 
 \end{align}
and set
$$
|a(t)| = \max_{i,j} |a^{ij}(t)|.
$$
We emphasize that there is no bounded assumption on $a^{ij}(t)$.
However,  to make sense of   equality \eqref{2018011010},   it is at least required that
\begin{align*}
\left| \int_0^t \int_{\fR^d}   a^{ij}(s)u(s,x) \phi_{x^ix^j}(x) dxds\right| < \infty \qquad \forall t \in [0,T],
\end{align*}
which holds if 
$$
u \in L_1\left( (0,T), |a(t)| dt ; L_p   \right), \quad p>1.
$$
Indeed, if  $u \in L_1\left( (0,T), |a(t)| dt ; L_p   \right)$, then by H\"older's inequality, with $q= \frac{p}{p-
1}$,
\begin{align}
							\label{2019011410}
\left| \int_0^t \int_{\fR^d}   a^{ij}(s)u(s,x) \phi_{x^ix^j}(x) dxds\right|
\leq \|\phi_{xx}\|_{L_q}  \int_0^t  \|u(s,\cdot)\|_{L_p}   |a(s)|ds  < \infty.
\end{align}

Moreover, if
 \begin{align}
						\label{2018122501}
\int_0^T |a(t)| dt < \infty
\end{align}
and $\sup_{t \in [0,T]} \|u(t,\cdot)\|_{L_p(\fR^d)} <\infty$, then
\begin{align}
						\label{2019011701}
 \int_0^t  |a(s)| \|u(s,\cdot)\|_{L_p} ds  
 \leq \sup_{s \leq T} \|u(s,\cdot)\|_{L_p} \int_0^t |a(s)| ds < \infty.
\end{align}

\begin{lemma}[A priori estimate]
					\label{deter a priori}
Let $p \in (1,\infty)$, $T \in (0,\infty)$, 
$f \in L_p((0,T) ; L_p )$, $u_0 \in L_p$, and  \eqref{2018122202} holds. Suppose that $u$ is a solution  to equation \eqref{2018122201}
and
$$
u \in C\left( [0,T] ; L_p \right) \cap L_1\left( (0,T), |a(t)|dt ; L_p   \right).
$$
Then  
\begin{align}
						\label{well 1}
\sup_{t \in [0,T]} \|u(t,\cdot)\|^p_{L_p} 
\leq N\left(  \|u_0\|^p_{L_p}  +  \int_0^T \|f(t,\cdot)\|^p_{L_p} dt  \right),
\end{align}
where $N=N(p,T)$.
\end{lemma}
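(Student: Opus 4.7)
The strategy is an $L_p$-energy method applied to a spatial mollification of $u$, exploiting the fact that the non-negativity of $(a^{ij}(t))$ yields a sign-favourable term after integration by parts. Since $a^{ij}$ depends only on $t$, convolution in $x$ commutes with the equation, which is what makes this approach robust against both unboundedness and degeneracy of the coefficients.

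First I would fix a standard mollifier $\phi^\varepsilon$ and plug the test function $\phi^\varepsilon(x-\cdot)$ into \eqref{2018011010}, obtaining for $u^\varepsilon := u * \phi^\varepsilon$, $f^\varepsilon := f * \phi^\varepsilon$, $u_0^\varepsilon := u_0 * \phi^\varepsilon$ the pointwise identity
$$
u^\varepsilon(t,x) = u_0^\varepsilon(x) + \int_0^t \bigl( a^{ij}(s) u^\varepsilon_{x^ix^j}(s,x) + f^\varepsilon(s,x)\bigr)\, ds
$$
for every $(t,x) \in [0,T]\times\fR^d$. Absolute convergence of the $a^{ij}$-integral is inherited from the hypothesis $u \in L_1((0,T), |a(t)|dt; L_p)$ together with the bound $\sup_{x}|u^\varepsilon_{x^i x^j}(s,x)| \leq C(\varepsilon) \|u(s,\cdot)\|_{L_p}$. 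Hence $u^\varepsilon(\cdot,x)$ is absolutely continuous, and by Young's inequality $u^\varepsilon,u^\varepsilon_{x^i},u^\varepsilon_{x^ix^j} \in L_p$ for each $t$.

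Next, I would multiply the equation by $|u^\varepsilon|^{p-2} u^\varepsilon$ and integrate over $\fR^d$. Integration by parts in the leading term gives
$$
\int_{\fR^d} |u^\varepsilon|^{p-2} u^\varepsilon \cdot a^{ij}(t) u^\varepsilon_{x^ix^j}\, dx = -(p-1)\, a^{ij}(t) \int_{\fR^d} |u^\varepsilon|^{p-2} u^\varepsilon_{x^i} u^\varepsilon_{x^j}\, dx \leq 0
$$
by \eqref{2018122202}, while H\"older bounds the forcing term by $\|u^\varepsilon\|_{L_p}^{p-1}\|f^\varepsilon\|_{L_p}$. This yields
$$
\frac{d}{dt}\|u^\varepsilon(t,\cdot)\|_{L_p}^p \leq p\,\|u^\varepsilon(t,\cdot)\|_{L_p}^{p-1}\,\|f^\varepsilon(t,\cdot)\|_{L_p},
$$
which by the standard $y \mapsto y^{1/p}$ trick integrates to $\|u^\varepsilon(t,\cdot)\|_{L_p} \leq \|u_0^\varepsilon\|_{L_p} + \int_0^t \|f^\varepsilon(s,\cdot)\|_{L_p}\,ds$. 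Raising to the $p$-th power via $(a+b)^p \leq 2^{p-1}(a^p+b^p)$ and H\"older in $s$ produces \eqref{well 1} for $u^\varepsilon$ with a constant $N(p,T)$ independent of $\varepsilon$. Standard mollifier convergence $u^\varepsilon\to u$, $u_0^\varepsilon\to u_0$, $f^\varepsilon \to f$ in $C([0,T];L_p)$ and $L_p((0,T);L_p)$ passes the bound to $u$.

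The main obstacle is the technical justification of the energy identity on the whole of $\fR^d$, and, when $1<p<2$, the singularity of $y\mapsto |y|^{p-2}y$ at the origin. Both are handled by routine regularization: replace $|u^\varepsilon|^{p-2}u^\varepsilon$ by $u^\varepsilon(\eta+|u^\varepsilon|^2)^{(p-2)/2}$, multiply by a smooth cutoff $\chi_R$ with $\chi_R = 1$ on $B_R$ and $|\nabla\chi_R| \leq C/R$ before integrating by parts; the $\chi_R$-error terms vanish as $R\uparrow\infty$ because $u^\varepsilon, \nabla u^\varepsilon \in L_p$, and $\eta\downarrow 0$ then recovers the desired identity. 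The mere local integrability of $a^{ij}(t)$ in $t$ is innocuous, since this coefficient enters only through a non-positive quadratic form that is simply discarded.
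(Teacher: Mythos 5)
Your proposal is correct and takes essentially the same route as the paper: mollify in $x$ so the equation holds pointwise, run the $L_p$ energy argument, discard the nonnegative quadratic form produced by \eqref{2018122202} after integration by parts (with the Fubini/integrability justification supplied by $u \in L_1((0,T),|a(t)|dt;L_p)$ and the $\varepsilon$-dependent bounds on $u^\varepsilon_{xx}$), and pass to the limit using $u^\varepsilon \to u$ in $C([0,T];L_p)$. Your closing step (differential inequality plus the $y\mapsto y^{1/p}$ trick, then H\"older in $s$) is only a cosmetic variant of the paper's absorption of $\sup_t\|u^\varepsilon\|^p_{L_p}$ via Young's inequality with a small constant, and your extra $\eta$- and $\chi_R$-regularizations are a slightly more careful treatment of the same integration by parts the paper performs directly.
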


\begin{proof}
If  the coefficients are bounded, then the lemma is a classical result and  can be found, for instance, in \cite{krylov1986characteristics,gerencser2015solvability}.
The proof for general case is similar. Nonetheless, we give a detailed proof  for the sake of the completeness.  

We use Sobolev mollifiers. Fix a nonnegative $\phi \in C_c^\infty(\fR^d)$ such that $0\leq \phi \leq 1$, $\int_{\fR^d}\phi \,dx=1$, and $\phi(x)=1$ near $x=0$.  Denote
$\phi^\varepsilon(x)= \varepsilon^{-d} \phi(x/\varepsilon)$, $u_0^\varepsilon(x)= u_0 \ast \phi^\varepsilon(x)$, and $u^\varepsilon (t,x) = u(t,\cdot) \ast \phi^\varepsilon(\cdot)(x)$.
Putting $\phi^\varepsilon(x-\cdot)$ in \eqref{2018011010}, for all $(t,x) \in (0,T) \times \fR^d$, we have
\begin{align}
\label{eqn 3.18.1}
u^\varepsilon(t,x) = u_0^\varepsilon(x) + \int_0^t a^{ij}(s)u^{\varepsilon}_{x^ix^j}(s,x)ds +\int_0^t f^\varepsilon(s,x)ds.
\end{align}
Note that \eqref{2018011010} and \eqref{eqn 3.18.1} make sense due to \eqref{2019011410}.  By the chain rule, for any $p>1$,
$$
\frac{d}{dt} (|u^\varepsilon|^p) =  p|u^\varepsilon|^{p-2}u^\varepsilon u^\varepsilon_t, \quad (0^{p-2}\times 0:=0)
$$
and thus by the Fundamental theorem of calculus 
\begin{align}
						\notag
|u^\varepsilon(t,x)|^p 
&=|u^\varepsilon_0(x)|^p+\int_0^t p|u^\varepsilon|^{p-2}(s,x) u^\varepsilon(s,x) a^{ij}(s)u^\varepsilon_{x^i x^j}(s,x)ds \\
						\label{2019012301}
&\qquad + \int_0^t p |u^\varepsilon(s,x)|^{p-2}u^\varepsilon(s,x) f^\varepsilon(s,x) ds.
\end{align}

To apply Fubini's theorem we first note that, since $\|u^{\varepsilon}(t)\|_p+\|u^{\varepsilon}_{xx}(t)\|_p\leq N(\varepsilon)\|u(t)\|_{L_p}$, by H\"older's inequality
\begin{eqnarray*}
\int^t_0 \int_{\fR^d} |u^{\varepsilon}|^{p-1}|a^{ij}| |u^{\varepsilon}_{x^ix^j}| dxds &\leq& \int^t_0 \|u^{\varepsilon}(s)\|^{p-1}_p\|u^{\varepsilon}_{xx}(s)\|_{p}|a(s)|ds\\
&\leq&N \sup_{r\leq T} \|u(r)\|^{p-1}_p \int^t_0 \|u(s)\|_p |a(s)|ds<\infty.
\end{eqnarray*}
Thus, integrating both sides of \eqref{2019012301} with respect to $x$, and applying   Fubini's theorem and the integration by parts, we have
\begin{align*}
&\int_{\fR^d}|u^\varepsilon(t,x)|^p  dx \\
&= \int_{\fR^d}|u^\varepsilon_0(x)|^pdx
+\int_0^t \int_{\fR^d} p|u^\varepsilon|^{p-2}(s,x) u^\varepsilon(s,x) a^{ij}(s)u^\varepsilon_{x^i x^j}(s,x)ds dx  \\
&\qquad + \int_0^t  \int_{\fR^d}p |u^\varepsilon(s,x)|^{p-2}u^\varepsilon(s,x) f^\varepsilon(s,x) dx ds\\
&= \int_{\fR^d}|u^\varepsilon_0(x)|^pdx
-\int_0^t \int_{\fR^d} p(p-1)|u^\varepsilon|^{p-2}(s,x) u_{x^j}^\varepsilon(s,x)a^{ij}(s)u^\varepsilon_{x^i}(s,x)ds dx \\
&\quad + \int_0^t \int_{\fR^d} p|u^\varepsilon|^{p-2}(s,x) u^\varepsilon(s,x) f^\varepsilon(s,x)dxds.
\end{align*}
Due to \eqref{2018122202},
\begin{align*}
&\int_0^t \int_{\fR^d} p(p-1)|u^\varepsilon|^{p-2}(s,x) u_{x^j}^\varepsilon(s,x)a^{ij}(s)u^\varepsilon_{x^i}(s,x)ds dx \geq 0.
\end{align*}
Thus
\begin{align*}
&\sup_{t \in [0,T]} \int_{\fR^d}|u^\varepsilon(t,x)|^p  dx \\
&\leq \int_{\fR^d}|u^\varepsilon_0(x)|^pdx
+ \int_0^T \int_{\fR^d} p|u^\varepsilon|^{p-2}(s,x)  \left| u^\varepsilon(s,x) \right|  \left| f^\varepsilon(s,x) \right| dxds.
\end{align*}
By H\"older's  inequality and Young's inequality,   for any constant $c>0$
\begin{align*}
\int_0^T \int_{\fR^d} |u^\varepsilon|^{p-2}(s,x) u^\varepsilon(s,x) f^\varepsilon(s,x)dxds
\leq \int_0^T \|  c u^{p-1}(s,\cdot)\|_{L_q(\fR^d)}  \| c^{-1}f\|_{L_p(\fR^d)} ds \\
\leq \frac{1}{q} c^q \int_0^T    \| u (s,\cdot)\|^p_{L_p(\fR^d)}  ds +   c^{-p} \frac{1}{p} \int_0^T  \| f\|^p_{L_p(\fR^d)} ds \\
\leq \frac{1}{q} c^q T  \sup_{s \leq T} \| u (s,\cdot)\|^p_{L_p(\fR^d)}  +   c^{-p} \frac{1}{p} \int_0^T  \| f\|^p_{L_p(\fR^d)} ds ,
\end{align*}
where $q= \frac{p}{p-1}$.
Therefore taking $c>0$ small so that $\frac{p}{q} c^q T <1$, 
we obtain 
\begin{align*}
\sup_{t \in [0,T]} \int_{\fR^d}|u^\varepsilon(t,x)|^p  dx
\leq N\left(\int_{\fR^d}|u^\varepsilon_0(x)|^pdx
+ \int_0^T \int_{\fR^d} |f^\varepsilon(s,x)|^p dxds \right),
\end{align*}
where $N$ depends only on $p$ and $T$. 
Observing
\begin{align*}
&\left(\int_{\fR^d}|u^\varepsilon_0(x)|^pdx+ \int_0^T \int_{\fR^d} |f^\varepsilon(s,x)|^p dxds \right) \\
&\leq \left(\int_{\fR^d}|u_0(x)|^pdx+ \int_0^T \int_{\fR^d} |f(s,x)|^p dxds \right),
\end{align*}
and using $u^{\varepsilon}\to u$ in $C(  [0,T];L_p)$,  we finally get (\ref{well 1}).
\end{proof}

\begin{remark}
If  \eqref{2018122501} holds, then by \eqref{2019011410} and \eqref{2019011701}, 
$$
C\left( [0,T] ; L_p \right) \cap L_1\left( (0,T), |a(t)|dt ; L_p   \right)
=C\left( [0,T] ; L_p \right).
$$ 
\end{remark} 

In Lemma \ref{deter a priori},  local  integrability  of the coefficients $a^{ij}(t)$  is not assumed.
However,   \eqref{2018122501} is needed for the proof of  the existence  as follows.

\begin{theorem}[Well-posedness]
					\label{well thm}
Let $p \in (1,\infty)$, $T \in (0,\infty)$, $f \in L_p((0,T) ; L_p ) $, and $u_0 \in L_p$.
 Suppose that \eqref{2018122202} and \eqref{2018122501} hold.
Then   there exists a unique solution $u \in C\left( [0,T] ; L_p \right) $ to equation \eqref{2018122201} such that
\begin{align}
						\label{well 1-2}
\sup_{t \in [0,T]} \|u(t,\cdot)\|^p_{L_p} 
\leq N\left( \|u_0\|^p_{L_p} + \int_0^T \|f(t,\cdot)\|^p_{L_p}\,dt \right),
\end{align}
where $N$ depends only on $p$ and $T$.
\end{theorem}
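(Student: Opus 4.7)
The plan is to prove uniqueness first using Lemma~\ref{deter a priori}, then to construct solutions for smooth data via an explicit Fourier / Gaussian kernel representation (which exploits that $a^{ij}$ depends only on $t$), and finally to extend to general $L_p$ data by density, again invoking Lemma~\ref{deter a priori}.

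Uniqueness is immediate. If $u_1,u_2\in C([0,T];L_p)$ are two solutions, then by \eqref{2018122501} and the remark following Lemma~\ref{deter a priori} both lie in $C([0,T];L_p)\cap L_1((0,T),|a(t)|dt;L_p)$, so $w=u_1-u_2$ satisfies the hypotheses of Lemma~\ref{deter a priori} with zero data, which forces $w\equiv 0$.

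For existence, I would first consider smooth data, say $u_0\in\cS(\fR^d)$ and $f\in C_c^\infty((0,T)\times\fR^d)$. Taking the $x$-Fourier transform reduces \eqref{2018122201} to the ODE $\partial_t \hat u(t,\xi) = -a^{ij}(t)\xi^i\xi^j \hat u(t,\xi)+\hat f(t,\xi)$, whose explicit solution is
\[
\hat u(t,\xi) = e^{-\int_0^t a^{ij}(s)\xi^i\xi^j ds}\hat u_0(\xi) + \int_0^t e^{-\int_s^t a^{ij}(r)\xi^i\xi^j dr}\hat f(s,\xi)\,ds.
\]
Equivalently, setting $A(t,s):= 2\int_s^t (a^{ij}(r))_{d\times d}\,dr$, which is positive semidefinite by \eqref{2018122202}, the solution may be written in physical space as
\[
u(t,x)=(K_{t,0}*u_0)(x)+\int_0^t (K_{t,s}*f(s,\cdot))(x)\,ds,
\]
where $K_{t,s}$ is the centered (possibly degenerate) Gaussian probability measure on $\fR^d$ with covariance $A(t,s)$. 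Differentiation in $t$ under the integral, justified by \eqref{2018122501} and the uniform bound $|e^{-\int_s^t a^{ij}(r)\xi^i\xi^j dr}|\leq 1$, shows that $u$ satisfies the weak formulation \eqref{2018011010} for every $\phi\in C_c^\infty(\fR^d)$. Since each $K_{t,s}$ is a probability measure, Young's inequality gives $\|u(t,\cdot)\|_{L_p}\leq \|u_0\|_{L_p}+\int_0^t \|f(s,\cdot)\|_{L_p}\,ds$, and continuity of $t\mapsto A(t,s)$ together with the strong continuity of convolution by Gaussians yields $u\in C([0,T];L_p)$ along with \eqref{well 1-2}.

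For general $u_0\in L_p$ and $f\in L_p((0,T);L_p)$, I would approximate them in norm by smooth compactly supported sequences $u_0^n, f^n$, obtain smooth solutions $u^n$ from the previous step, and apply \eqref{well 1} to the differences $u^n-u^m$, which solve \eqref{2018122201} with data $u_0^n-u_0^m$ and $f^n-f^m$. This makes $\{u^n\}$ Cauchy in $C([0,T];L_p)$, so it converges to some $u\in C([0,T];L_p)$ obeying \eqref{well 1-2}. The nontrivial step is the limit passage in \eqref{2018011010}: for fixed $\phi\in C_c^\infty(\fR^d)$ the integrand $a^{ij}(s)\int_{\fR^d} u^n(s,x)\phi_{x^ix^j}(x)\,dx$ is dominated by $N|a(s)|\sup_{r\leq T}\|u^n(r,\cdot)\|_{L_p}$, which is bounded uniformly in $n$ and integrable in $s$ by \eqref{2018122501}, so dominated convergence identifies the limit as $\int_0^t(a^{ij}(s)u(s,\cdot),\phi_{x^ix^j})ds$, proving $u$ is a distributional solution. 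The main obstacle in the whole argument is precisely this limit passage together with the handling of the possibly degenerate Gaussian $K_{t,s}$ and the verification that the explicit representation does yield a distributional solution; note that local integrability \eqref{2018122501} is indispensable here, even though it was not needed for the a priori estimate itself.
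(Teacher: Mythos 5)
Your proposal is correct, and it follows the paper's overall skeleton (uniqueness and the final density argument via Lemma~\ref{deter a priori}, plus an explicit representation for smooth data), but the construction step is genuinely different in flavor. The paper builds the solution probabilistically: it takes a square root $(\sigma')^2=2A$, an auxiliary Wiener process $W'$ on a separate probability space, sets $X'_t=\int_0^t\sigma'(s)dW'_s$, defines $u(t,x)=\bE'[u_0(x+X'_t)]+\int_0^t\bE'[f(s,x+X'_t-X'_s)]ds$, and verifies the equation with It\^o's formula (and a Fubini argument for the $f$-part). Your kernel $K_{t,s}$, the centered Gaussian with covariance $2\int_s^t(a^{ij}(r))dr$, is exactly the law of $X'_t-X'_s$, so the two representations coincide; you simply derive and verify the formula by Fourier multiplication instead of stochastic calculus, which is more elementary and avoids the auxiliary probability space. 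What the paper's probabilistic set-up buys is reuse: the same representation, taken $\omega$-wise with random coefficients, is the engine of Section 4 (Remark~\ref{no smooth} and the proof of Theorem~\ref{thm 20181222}), so the paper has a reason to phrase it stochastically here. One small point to tighten in your write-up: since $a^{ij}$ is only locally integrable in $t$, $\hat u(t,\xi)$ is merely absolutely continuous in $t$, so rather than ``differentiating in $t$ under the integral'' you should verify the time-integrated identity \eqref{2018011010} directly from the Duhamel formula for $\hat u$ by Fubini (justified by $\int_0^T|a(s)|ds<\infty$ and the rapid decay of $\hat u_0$, $\hat f$ for your smooth data); this is exactly what the integrated weak formulation asks for, and the rest of your argument (contractivity of convolution with the possibly degenerate Gaussian probability measure, Cauchy sequences via \eqref{well 1}, dominated convergence in the limit passage) goes through as stated.
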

\begin{proof}

We remark that the theorem is a classical result if the coefficients are bounded, and we give a proof for the general case for the sake of the completeness.

{\bf Part I.} (Estimate and Uniqueness)
Due to \eqref{2018122501},
$$
C\left( [0,T] ; L_p \right) \cap L_1\left( (0,T), |a(t)|dt ; L_p   \right)=C\left( [0,T] ; L_p \right) .
$$
By this and  Lemma \ref{deter a priori}, (\ref{well 1-2}) holds if  $u \in C\left( [0,T] ; L_p \right)$ is a solution to equation (\ref{2018122201}), and the uniqueness also follows. 
\smallskip

\bigskip

{\bf Part II.} (Existence)
\smallskip

Let $W'_t=(W'^1_t,\cdots, W'^d_t)$ be a $d$-dimensional Wiener process on a probability space $(\Omega', \rF',P')$. 
Since  $A(t):=(a^{ij}(t))$ is a nonnegative symmetric matrix, there exists a nonnegative symmetric (non-ramdon) matrix $\sigma'(t)=(\sigma'_{ij}(t))$ such that
$$
2A(t)= (\sigma')^2(t).
$$
Due to \eqref{2018122501}, $\sigma'(t)$ is It\^o integrable (cf. \cite[Chapter 6.3]{Krylov2002}),  i.e.
$$
\int_0^t  |\sigma'(s)|^2 1_{s \leq T} ds <\infty, \quad \forall t.
$$
We define 
\begin{align}
					\label{2018122211}
X'_t := \int_0^t \sigma'(t)dW'_t,  \quad (i.e.~~ X'^i_t=\sum_{k=1}^d \int^t_0 \sigma'_{ik}(s)dW'^k_s, ~ (i=1,2,\cdots,d)).
\end{align}
We will first show that $u(t,x)$ defined as
\begin{align}
						\label{2018122203}
u(t,x) :=  \bE'[u_0(x+X'_t)] + \int_0^t\bE'[f(s,x+X'_t-X'_s )] ds
\end{align}
is a solution to equation (\ref{2018122201}) if $u_0$ and $f$ are sufficiently smooth,
where $\bE'$ is the expectation in the probability space $(\Omega', \rF',P')$.
Then by using an approximation, we finally  prove the existence of a solution for general $u_0$ and $f$.

We divide the details   into several  steps.

\begin{enumerate}[(i)]
\item  Let $u_0 \in C^2 \cap H^2_p$ and $f=0$. Then by It\^o's formula, for all $t \in [0,T]$,
\begin{align}
\nonumber
u(t,x):= \bE'[u_0(x+X'_t)] 
&= \bE'[u_0(x)] + \int_0^t a^{ij}(s)  \bE'\left[\frac{\partial^2 u_0}{\partial x^i \partial x^j}(x+X'_s) \right]ds \\
&= u_0(x) + \int_0^t a^{ij}(s)  u_{x^ix^j}(s,x)ds.   \label{eqn 3.19.1}
\end{align}
Thus $u(t,x)$ satisfies equation \eqref{2018122201}.   Also note that  
\begin{eqnarray*}
\int^t_0 \|a^{ij}(s) u_{x^ix^j}\|_p \,ds&\leq& \int^t_0 |a(s)| \bE' \left\|\frac{\partial^2 u_0}{\partial x^i \partial x^j}(x-X'_s) \right\|_p \,ds\\
&\leq& \|u_0\|_{H^2_p}\int^T_0 |a(s)|ds<\infty.
\end{eqnarray*}
Therefore, from  \eqref{eqn 3.19.1} it easily follows that   $u \in  C\left( [0,T] ; L_p \right)$ .

\item  Let $u_0 =0$ and $f \in L_1\left((0,T);C^2 \cap H^2_p \right)$. 
 Applying  (a generalized) It\^o's formula (see e.g. Theorem 4.1.1 or  Corollary 4.1.2 in \cite {Krylov1995}),  we get for each $t>s$, 
\begin{align}
						\label{2019011901}
\bE'[f(s,x+X'_t-X'_s)]
=  f(s,x) + \int_s^t a^{ij}(r)  \bE'\left[f_{x^ix^j}(s,x+X'_r-X'_s) \right]dr.
\end{align}
By integrating the above terms with respect to $s$ from $0$ to $t$ and the Fubini theorem, 
\begin{align}
\nonumber
u(t,x)
&:=\int_0^t\bE'[f(s,x+X'_t-X'_s)]ds \\
&=\int_0^tf(s,x)ds +  \int_0^t \int_s^t a^{ij}(r)  \bE'\left[f_{x^ix^j}(s,x+X'_r-X'_s) \right]dr ds  \nonumber\\
&=\int_0^tf(s,x)ds +  \int_0^t a^{ij}(r) \int_0^r   \bE'\left[f_{x^ix^j}(s,x+X'_r-X'_s) \right] ds dr   \label{eqn 3.20.1}\\
&=\int_0^tf(s,x)ds +\int_0^t a^{ij}(r) u_{x^ix^j}(r,x) dr. \nonumber
\end{align}
Therefore $u(t,x)$ is a solution to equation \eqref{2018122201}.
The inclusion $u \in  C\left( [0,T] ; L_p \right) $ can be easily obtained from \eqref{eqn 3.20.1} as was shown in (i) if $f \in L_1\left((0,T);C^2 \cap H^2_p\right)$.

\item (General Case)
Choose sequences 
$$u^n_0 \in C_c^\infty(\fR^d), \quad 
f^n \in  C\left([0,T];C^2 \cap H^2_p\right),
$$
so that as $n \to \infty$,
$$
u^n_0 \to u_0 \quad \text{in} \quad L_p \qquad \text{and} \qquad f^n \to f \qquad \text{in} \quad L_p((0,T) ; L_p ).
$$
Then by (i) and (ii), for all $n \in \bN$
\begin{align}
						\label{ge sol re}
u^n(t,x) := \bE'[u^n_0(x+X'_t)] + \int_0^t\bE'[f^n(s,x+X'_t-X'_s) ] ds
\end{align}
satisfies
 \begin{align*}
&u^n_t(t,x)=a^{ij}(t)u^n_{x^ix^j}(t,x)+f^n(t,x) \qquad (t,x) \in (0,T] \times \fR^d \\
&u^n(0,x)=u^n_0(x).
 \end{align*}
\end{enumerate}
Moreover, due to \eqref{well 1}, for all $n, m \in \bN$
\begin{align*}
\sup_{t \in [0,T]} \|(u_n-u_m)(t,\cdot)\|^p_{L_p} 
\leq N\left( \int_0^T \| (f_n - f_m)(t,\cdot)\|^p_{L_p} dt + \|u^n_0-u^m_0\|^p_{L_p} \right).
\end{align*}
Thus $u^n$ becomes a Cauchy sequence in $C\left( [0,T] ; L_p \right)$ 
and thus there exists a $u \in C\left( [0,T] ; L_p \right)$ such that
$u_n \to u$ in $C\left( [0,T] ; L_p \right)$ as $n \to \infty$.   Also, using \eqref{2018011010}  corresponding to $(u_n, f_n,u^n_0)$, and then taking $n\to \infty$, we easily find that  $u$ is a solution to equation \eqref{2018122201}. 
The theorem is proved.
\end{proof}

\begin{remark}
					\label{no smooth}
\begin{enumerate}[(i)]

\item Due to the approximation used in the proof of Theorem \ref{well thm}, for general  $u_0\in L_p$ and  $ f\in L_p((0,T) ; L_p )$, the solution $u$ to \eqref{2018122201} is given by
\begin{align}
					\label{0503 e 1}
 u(t,x)= \bE'[u_0(x+X'_t)] + \int_0^t\bE'[f(s,x+X'_t-X'_s )] ds.
\end{align}

More generally, following the proof of the theorem, one can check that  $u$ defined in \eqref{0503 e 1}  belongs to $C([0,T]; L_p)$ and becomes a solution to \eqref{2018122201} under a weaker condition, that is,  if $u_0 \in L_p$ and $f \in  L_1((0,T) ; L_p )$.  

Indeed, in the above approximation, we can  take 
 $u^n_0 \in C^2\cap H^2_p$ and $f^n \in L_1((0,T); C^2 \cap H^2_p)$ so that, as $n \to \infty$,
$$
u_0^n \to u_0 \quad \text{in}  \quad L_p,  \qquad f^n \to f \quad \text{in} \quad L_1((0,T);L_p).
$$
Take $u^n$ from \eqref{ge sol re}, then by Minkowski's inequality and the translation invariant of the $L_p$-norm,
\begin{align}
								\notag
&\| u^n(t,\cdot) - u(t,\cdot) \|_{L_p}  \\
								\notag
&\leq\left\|\bE'[(u^n_0-u_0)(\cdot + X'_t)]  \right\|_{L_p} +  \left \|\int_0^t\bE'[(f^n -f)(s,\cdot +X'_t-X'_s )] ds \right\|_{L_p}  \\
								\notag
&\leq   \bE' \left[  \left\| (u^n_0-u_0)(\cdot  + X'_t)  \right\|_{L_p} \right]  + \int_0^t\bE' \left[ \left\|(f^n -f)(s,\cdot+X'_t-X'_s ) \right\|_{L_p}  \right] ds \\
								\label{2019032120}
&= \left\| (u^n_0-u_0)(\cdot)  \right\|_{L_p}+ \int_0^t \left\|(f^n -f)(s,\cdot) \right\|_{L_p}  ds .
\end{align}
Also, by \eqref{2019032120},
\begin{align}
								\notag
& \int_0^T |a(t)|\| u^n(t,\cdot) - u(t,\cdot) \|_{L_p}   dt \\
								\label{2019032121}
&\leq \int_0^T|a(t)|dt  \left\| (u^n_0-u_0)(\cdot)  \right\|_{L_p}+  \int_0^T|a(t) |dt \int_0^T \left\|(f^n -f)(s,\cdot) \right\|_{L_p}  ds .
\end{align}
Therefore for any $\phi \in C_c^\infty(\fR^d)$ and $t \in (0,T)$, taking $n\to \infty$ to the equality
\begin{align*}
\left( u^n(t,\cdot), \phi \right)
= \left( u^n_0, \phi \right) + \int_0^t a^{ij}(s) \left( u^n(s,\cdot), \phi_{x^ix^j} \right) ds + \int_0^t \left(f^n(s,\cdot), \phi \right) ds,
\end{align*}
we get
\begin{align*}
\left( u(t,\cdot), \phi \right)
= \left( u_0, \phi \right) + \int_0^t a^{ij}(s) \left( u(s,\cdot), \phi_{x^ix^j} \right) ds + \int_0^t \left(f(s,\cdot), \phi \right) ds.
\end{align*}
In other words, the function $u$ defined in \eqref{0503 e 1} is a solution to \eqref{2018122201} if  $u_0 \in L_p$ and $f \in  L_1((0,T) ; L_p )$.
Moreover,  by \eqref{2019032120},   $\sup_{t\leq T} \|u^n(t)-u(t)\|_p \to 0$ as $n\to \infty$, and therefore
$$
u \in  C\left( [0,T] ; L_p \right) .
$$

\item
Let $ h \in C_0^2(\fR^d)$. 
Recall
$$
X'_t -X'_r = \int_r^t \sigma'(s)dW'_s.
$$
Note that, since $\sigma'$ is not random, both  $X'_t -X'_r$ and 
$\int_0^{t-r} \sigma'(t-s)dW'_s$  have Gaussian distributions with mean zero and the same covariance, and therefore they have the same distribution.
Thus by It\^o's formula and a change of variables,
\begin{align*}
\bE'[h(x+X'_t-X'_r)]
&=\bE'\left[h\left(x+ \int_0^{t-r} \sigma'(t-s)dW'_s \right) \right] \\
&=  h(x)  +\int_0^{t-r} a^{ij}(t-s)  \bE'\left[h_{x^ix^j} \left(x+\int_0^{s} \sigma'(t-\rho)dW'_\rho \right) \right]ds \\
&=  h(x)  +\int_r^{t} a^{ij}(s)  \bE'\left[h_{x^ix^j} \left(x+\int_0^{t-s} \sigma'(t-\rho)dW'_\rho \right) \right]ds \\
&=  h(x)  +\int_r^{t} a^{ij}(s)  \bE'\left[h_{x^ix^j} \left(x+ X'_t -X'_s\right) \right]ds.
\end{align*}
This will be used later for the solution representation to SPDEs (see Remark \ref{2019041730}(ii) below). 

\end{enumerate}

\end{remark}

\mysection{Stochastic  linear equations with additive noises }

In this section, we study the following SPDE with additive noises:
\begin{align}
							\notag
& du= \left( a^{ij}(t)u_{x^ix^j}+f  \right)dt + g^k dw^k_t,\quad (t,x) \in  (0,\tau] \times \fR^d \\
							\label{linear eqn}
&u(0,x)=u_0(x),
\end{align}
where $\tau$ is a bounded stopping time.  We  assume that  the coefficients $a^{ij}$  are predictable functions of $(\omega,t)$ and satisfy 
 \begin{align}
						\label{2019041850}
 a^{ij}(t) \xi^i \xi^j \geq 0,\quad  \quad \forall  (\omega, t,\xi) \in  \Omega \times (0,\infty) \times \fR^d.
 \end{align}


We denote by $\bH_c^\infty(\tau, l_2)$ the space of stochastic processes $g=(g^1,g^2, \ldots)$ such that $g^k=0$ for all large $k$ and each $g^k$ is  of the type
$$
g^k(t,x)= \sum_{i=1}^{j(k)}1_{(\tau_{i-1},\tau_i]}(t) g^{ik}(x),
$$
where  $j(k) \in \bN$, $g^{ik} \in C_c^\infty(\fR^d)$, and $\tau_i$ are stopping times with $\tau_i \leq \tau$.
Similarly, we denote by $\bH_c^\infty( \tau )$ the space of stochastic processes $g$ such that 
$$
g(t,x)= \sum_{i=1}^{j}1_{(\tau_{i-1},\tau_i]}(t) g^{i}(x),
$$
where $j \in \bN$, $g^{i} \in C_c^\infty(\fR^d)$, and $\tau_i$ are stopping times with $\tau_i \leq \tau$.
Also, we denote by $\bH_c^\infty(\fR^d)$ the space of random variables $g_0$ of the type
$$
g_0(\omega,x)= 1_A(\omega)g(x)
$$
where  $g \in C_c^\infty(\fR^d)$, and $ A \in \rF_0$.

It is known that $\bH^\infty_c(\tau,l_2) $ is dense in $\bH^\gamma_p(\tau,l_2)$ for all $p \in (1,\infty)$ and $\gamma \in \fR$ (for instance, see \cite[Theorem 3.10]{Krylov1999}).
In particular, $\bH_c^\infty(\tau)$ is dense in $\bH^\gamma_p(\tau)$ for all $p \in (1,\infty)$ and $\gamma \in \fR$.
Following the idea of \cite[Theorem 3.10]{Krylov1999}, one can also easily check that $\bH_c^\infty(\fR^d)$ is dense in $\bB_p^\gamma$ for all $p \in (1,\infty)$ and $\gamma \in \fR$.

\begin{theorem}
						\label{thm 20181222}
Let $p \in [2, \infty)$, $T \in [0,\infty)$,  $\gamma \in \fR$, and $\tau$ be a stopping time such that $\tau \leq T$.
Assume that the coefficients $a^{ij}(t)$ are locally integrable in $t$, that is,
\begin{align}
						\label{2018122801}
\sum_{i,j}\int_0^t  |a^{ij}(\omega,t)|  dt < \infty,  \quad \forall t ~ (a.s.) .
\end{align}
Then for all 
$u_0 \in \bH_p^{\gamma}$, $f \in \bH^\gamma_p \left(\tau\right)$, and $g \in \bH^{\gamma}_p \left(\tau, l_2\right)$, 
there exists a unique solution 
$u \in L_p \left( \Omega, \rF; C\left( [0,\tau] ; H^\gamma_p \right) \right)$ to \eqref{linear eqn} such that
\begin{align}
                             \label{2018122210}
\bE \sup_{t  \in [0,\tau] } \|u(t,\cdot)\|^p_{H^\gamma_p}
\leq N(p,T) \left( \|u_0\|^p_{\bH^\gamma_p} + \|f\|^p_{\bH^\gamma_p(\tau)} 
+\|g\|^p_{\bH^\gamma_p(\tau,l_2)}  \right).
\end{align}
\end{theorem}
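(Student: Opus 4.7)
The plan is to prove the theorem in three steps: reduce to $\gamma = 0$; establish the $L_p$-a priori estimate, which also yields uniqueness; and construct existence by subtracting the noise part and invoking Theorem \ref{well thm}. For the reduction, since $a^{ij}(t)$ is independent of $x$, the Bessel potential $(1-\Delta)^{\gamma/2}$ commutes with every term of \eqref{linear eqn}, so applying it converts the problem into the $\gamma=0$ problem with data $(1-\Delta)^{\gamma/2}(u_0,f,g)$ in $\bL_p \times \bL_p(\tau) \times \bL_p(\tau, l_2)$.

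For the a priori estimate, given a candidate solution $u \in L_p(\Omega, \rF; C([0,\tau]; L_p))$, I would mollify as in Remark \ref{moli rem} to obtain $u^\varepsilon$ satisfying the mollified SPDE pointwise in $x$. Applying It\^o's formula to $|u^\varepsilon|^p$, integrating in $x$, and using integration by parts, the second-order contribution
\[
-p(p-1)\, a^{ij}(t) \int_{\fR^d} |u^\varepsilon|^{p-2} u^\varepsilon_{x^i} u^\varepsilon_{x^j}\, dx \leq 0
\]
by \eqref{2019041850}. Controlling the drift by H\"older--Young and the It\^o martingale by the Burkholder--Davis--Gundy inequality, and absorbing small constants, yields
\[
\bE \sup_{t \leq \tau} \|u^\varepsilon(t,\cdot)\|_{L_p}^p \leq N(p,T) \left( \|u_0^\varepsilon\|_{\bL_p}^p + \|f^\varepsilon\|_{\bL_p(\tau)}^p + \|g^\varepsilon\|_{\bL_p(\tau, l_2)}^p \right).
\]
Letting $\varepsilon \to 0$ delivers \eqref{2018122210} for $\gamma=0$, and uniqueness follows by applying this estimate to the difference of two solutions.

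For existence, I first treat the dense subclass $u_0 \in \bH^\infty_c(\fR^d)$, $f \in \bH^\infty_c(\tau)$, $g \in \bH^\infty_c(\tau, l_2)$. Define
\[
W(t,x) := \sum_k \int_0^t g^k(s,x)\, dw^k_s,
\]
a finite sum of semimartingales whose summands are smooth in $x$ with uniform compact support; hence $W(\omega, t, \cdot) \in C^\infty_c(\fR^d)$ uniformly in $(\omega, t)$ and $W$ is predictable. The substitution $v := u - W$ formally converts the SPDE into, for each $\omega$, the deterministic equation
\[
\frac{dv}{dt} = a^{ij}(t) v_{x^ix^j} + F(t,x), \qquad v(0) = u_0,
\]
with $F := f + a^{ij}(t) W_{x^ix^j}$. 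Since $W_{x^ix^j}$ is bounded with uniform compact $x$-support and $a^{ij}$ is locally integrable in $t$ by \eqref{2018122801}, $F(\omega,\cdot,\cdot) \in L_1((0,\tau(\omega)); L_p)$ almost surely, so Remark \ref{no smooth}(i) furnishes a unique pathwise $v \in C([0,\tau]; L_p)$ given by the formula \eqref{0503 e 1}; this representation also guarantees predictability through an auxiliary Brownian motion independent of $(w^k)$. Then $u := v + W$ is the desired solution. The general case follows from the a priori estimate by approximating $(u_0, f, g)$ in the dense subclass, extracting a Cauchy sequence in $L_p(\Omega, \rF; C([0,\tau]; L_p))$, and passing to the distributional limit in \eqref{def eq 2}.

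The main obstacle is ensuring predictability of the pathwise solution $v$ when $a^{ij}$ is itself random and merely locally integrable in $t$; this is resolved by the explicit representation \eqref{0503 e 1} constructed on an enlarged probability space, which inherits measurability in $\omega$ from the data. Once this upgrade of Section 3 is in place, the rest of the argument follows the classical scheme of It\^o-based a priori estimates and density-based existence.
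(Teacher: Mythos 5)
Your overall route coincides with the paper's: reduce to $\gamma=0$ by the isometry of $(1-\Delta)^{\gamma/2}$, prove the a priori bound by mollification plus It\^o's formula for $|u|^p$ (dropping the nonnegative second-order term, H\"older--Young for the drift, BDG for the martingale), and construct the solution for dense data by subtracting the stochastic integral $W(t,x)=\int_0^t g^k\,dw^k_s$ and solving the resulting pathwise deterministic equation via the representation of Remark \ref{no smooth}(i) on an auxiliary probability space, then pass to the limit by density. Merging the paper's steps (i) and (ii) into a single source term $F=f+a^{ij}W_{x^ix^j}$ is only a cosmetic difference.

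There is, however, a genuine gap in the existence step. You assert that $u:=v+W$ ``is the desired solution,'' i.e.\ lies in $L_p(\Omega,\rF;C([0,\tau];L_p))$, but your construction only gives $v(\omega)\in C([0,\tau(\omega)];L_p)$ pathwise; the membership in the solution class requires $\bE\sup_{t\le\tau}\|u(t,\cdot)\|_{L_p}^p<\infty$, and this cannot be read off from your bound. Indeed, the natural estimate from the representation is
\begin{align*}
\sup_{t\le\tau}\|v(t,\cdot)\|_{L_p}\le \|u_0\|_{L_p}+\int_0^\tau\|f(s,\cdot)\|_{L_p}\,ds+\Big(\int_0^\tau|a(s)|\,ds\Big)\sup_{t\le\tau}\|W_{xx}(t,\cdot)\|_{L_p},
\end{align*}
and since \eqref{2018122801} gives only almost sure finiteness of $\int_0^\tau|a|\,ds$ (no moments, and $a$ is random and not independent of the $w^k$), the $p$-th moment of the right-hand side may be infinite. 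This matters beyond bookkeeping: the a priori estimate of Part~I is proved (and absorbed) only for solutions already in $L_p(\Omega;C([0,\tau];L_p))$, so without the moment bound you cannot legitimately apply it to $u^n-u^m$ in your density/Cauchy argument, nor invoke uniqueness in that class. The paper closes exactly this hole by first assuming the truncation $\int_0^t|a|\,dt\le M$ (its condition \eqref{20190422201}), under which $\bE\int_0^\tau\|a^{ij}v_{x^ix^j}\|_{L_p}\,dt\le M\,(\bE\sup_t\|v_{xx}\|_{L_p}^p)^{1/p}<\infty$ via BDG, and then removing it with the stopping times $\tau_n=\inf\{t\le\tau:\int_0^t|a|\,ds>n\}$, the $a$-independent a priori estimate, Fatou's lemma, and a continuity extension up to $t=\tau$. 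Your proposal needs this localization (or an equivalent device); the predictability issue you flag is real but is the easier of the two, and is handled as you suggest by a measurable version of the parametrized stochastic integral.
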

\begin{proof}
If the coefficients are bounded then the results were proved  in \cite{krylov1986characteristics}. 

Due to the isometry of the map $(1-\Delta)^{\gamma/2}$ on $H^{\gamma}_p$, we may assume  $\gamma=0$.

{\bf Part I.} (A priori estimate and the uniqueness).

First, we show that any solution $u \in L_p ( \Omega; C( [0,\tau] ;L_p)) $ to \eqref{linear eqn} satisfies \eqref{2018122210}
 following the proof of Lemma \ref{deter a priori}, but using It\^o's formula instead of the chain rule.
Let $u$ be a solution to \eqref{linear eqn} in $ L_p(\Omega; C( [0,\tau] ; L_p)) $.
Due to the argument of Sobolev's mollifier used in the proof of Lemma \ref{deter a priori}, we may assume that the given solution $u$ and the data $f$ and $g$  are sufficiently smooth with respect to $x$. 
Then  by It\^o's formula,
\begin{align*}
d\left(|u|^p \right) 
&=  p |u|^{p-2} u  \left( a^{ij}(t) u_{x^ix^j} + f \right)  dt + p |u|^{p-2}u  u_x g^k dw^k_t \\
&\qquad +\frac{1}{2} p (p-1) |u|^{p-2} |g|_{l_2}^2 dt.
\end{align*}
Applying (stochastic) Fubini's theorem, and the integration by parts, we have
\begin{align*}
&\int_{\fR^d}|u(t,x)|^p  dx \\
&= \int_{\fR^d}|u_0(x)|^pdx
-\int_0^t \int_{\fR^d} p(p-1)|u|^{p-2}(s,x) u_{x^j}(s,x)a^{ij}(t)u_{x^i}(s,x)ds dx \\
&\quad + \int_0^t \int_{\fR^d} p|u|^{p-2}(s,x) u(s,x) f(s,x)dxds \\
& \quad + \frac{1}{2} p(p-1)\int_0^t \int_{\fR^d} |u|^{p-2}(s,x)|g|_{l_2}^2 (s,x)dxds \\
&\quad + \int_0^t \int_{\fR^d} p|u|^{p-2}(s,x) u(s,x) (g^{k})  (s,x)dxdw_s^k \qquad \forall  t \in [0,\tau].
\end{align*}
By the BDG (Burkholder-Davis-Gundy) inequality,  the H\"older inequality, and the generalized Minkowski inequality,
\begin{align*}
& \bE \left[\sup_{t \leq \tau} \left|\int_0^\tau \int_{\fR^d} p|u|^{p-2}(s,x) u(s,x) (g^{k})  (s,x) dxdw_s^k \right| \right]\\
&\leq N(p) \bE \left[\left|\int_0^\tau \left|\int_{\fR^d} |u|^{p-2}(s,x) u(s,x) (g^{k})  (s,x) dx \right|_{l_2}^2 ds \right|^{1/2} \right] \\
&\leq N(p) \bE \left[\left|\int_0^\tau \left|\int_{\fR^d} |u|^{p-1}(s,x)  |g  (s,x)|_{l_2} dx \right|^2 ds \right|^{1/2} \right] \\
&\leq N(p) \bE \left[\left|\int_0^\tau  \left| \|u^{p-1}(s,\cdot)\|_{L_q}    \|g\|_{L_p(l_2)}  \right|^2 ds \right|^{1/2} \right],
\end{align*}
where $q= \frac{p}{p-1}$.
Due to \eqref{2019041850},
\begin{align*}
\int_0^t \int_{\fR^d} p(p-1)|u|^{p-2}(s,x) u_{x^j}(s,x)a^{ij}(t)u_{x^i}(s,x)ds dx  \geq 0.
\end{align*}
Thus
\begin{align*}
&\bE \left[ \sup_{t \in [0,\tau]} \int_{\fR^d}|u(t,x)|^p  dx \right] \\
&\leq \bE \left[ \int_{\fR^d}|u_0(x)|^pdx \right]
+\bE\left[ \left|\int_0^\tau \int_{\fR^d} p|u|^{p-2}(s,x) u(s,x) f(s,x)dxds  \right| \right] \\
&\qquad + N\bE \left[\left|\int_0^\tau  \left| \|u^{p-1}(s,\cdot)\|^{}_{L_q}    \|g\|_{L_p(l_2)}  \right|^2 ds \right|^{1/2} \right].
\end{align*}
By H\"older's  inequality and Young's inequality,   for any constant $c>0$
\begin{align*}
&\bE\left[ \left|\int_0^\tau \int_{\fR^d} |u|^{p-2}(s,x) u(s,x) f(s,x)dxds  \right|\right]  \\
&\leq \bE \left[ \int_0^\tau \|  c u^{p-1}(s,\cdot)\|_{L_q(\fR^d)}  \| c^{-1}f\|_{L_p(\fR^d)} ds \right] \\
&\leq \frac{p}{q} c^q  \bE \left[\int_0^\tau    \| u (s,\cdot)\|^p_{L_p(\fR^d)}  ds \right] +   c^{-p} \frac{1}{p} \bE \left[\int_0^\tau  \| f\|^p_{L_p(\fR^d)} ds \right] \\
&\leq \frac{p}{q} c^q T  \bE \left[ \sup_{s \leq T} \| u (s,\cdot)\|^p_{L_p(\fR^d)}  \right] +   c^{-p} \frac{1}{p} \bE \left[ \int_0^\tau  \| f\|^p_{L_p(\fR^d)} ds \right].
\end{align*}
Similarly,  
\begin{align*}
&\bE \left[\left|\int_0^\tau  \left| \| u^{p-1}(s,\cdot)\|_{L_q}    \|g\|_{L_p(l_2)}  \right|^2 ds \right|^{1/2} \right] \\
&=  \bE \left[\left|\int_0^\tau  \left| \| c u^{p-1}(s,\cdot)\|_{L_q}    \| c^{-1} g\|_{L_p(l_2)}  \right|^2 ds \right|^{1/2} \right] \\
&\leq  \bE \left[   \sup_{t \leq \tau} \| c^{\frac{1}{p-1}} u(t,\cdot)\|^{p-1}_{L_p} \left|\int_0^\tau  \left|    \| c^{-1} g\|_{L_p(l_2)}  \right|^2 ds \right|^{1/2} \right] \\
&\leq   \left( \bE \left[   \sup_{t \leq \tau} \| c^{\frac{1}{p-1}}u(t,\cdot)\|^p_{L_p}  \right] \right)^{1/q}  \left(\bE\left[ \left|\int_0^\tau  \left|    \| c^{-1} g\|_{L_p(l_2)}  \right|^2 ds \right|^{p/2} \right] \right)^{1/p}\\
&\leq  N\left( \bE \left[   \sup_{t \leq \tau} \| c^{\frac{1}{p-1}} u(t,\cdot)\|^p_{L_p}  \right] \right)^{1/q}   \left( \| c^{-1} g\|^p_{\bL_p(\tau)} \right)^{1/p} \\
&\leq   N \left(\frac{c^{\frac{p}{p-1}} }{q} \bE \left[   \sup_{t \leq \tau} \|  u(t,\cdot)\|^p_{L_p}  \right]     + \frac{1}{c^p p} \left( \| g\|^p_{\bL_p(\tau)} \right)^{1/p} \right).
\end{align*}
Therefore taking $c>0$ small  enough, 
we obtain 
\begin{align*}
\bE\left[ \sup_{t \in [0,T]} \int_{\fR^d}|u(t,x)|^p  dx \right]
\leq N\left( \|u_0\|^p_{\bL_p} + \|f\|_{\bL_p(\tau)}^p + \|g\|_{\bL_p(\tau,l_2)}^p \right),
\end{align*}
where $N$ depends only on $p$ and $T$. 
Obviously, this a priori estimate yields the uniqueness of the solution.

\bigskip

{\bf Part II.} (Existence)
\smallskip
We divide the proof of the existence into several steps.
\begin{enumerate}[(i)] 
\item First, we assume that $u_0=0$, $f=0$,  and
\begin{align}
					\label{2019032140}
g \in \bH_c^\infty(\tau, l_2).
\end{align}
For a while, we additionally assume that there exists a positive constant $M>0$ such that
\begin{align}
						\label{20190422201}
\int_0^t  |a(t)|  dt  \leq M,  \quad \forall t\leq \tau ~ (a.s.).
\end{align}

Let $(\Omega', \rF', P')$  be a probability space different from $(\Omega,\cF,P)$ and  $W'_t$ be a Wiener process on $(\Omega', \rF', P')$.   
Take  a symmetric matrix-valued process $\sigma'_t$ on $\Omega$  such that 
$$
( a^{ij}(\omega,t))=\frac{1}{2} (  \sigma')^2(\omega,t).
$$
For each fixed $\omega\in  \Omega$, we define the stochastic process $X'_{t,\omega}$ on  $\Omega'\times [0,\infty)$  by 
\begin{equation}
   \label{eqn 4.25.3}
X'_{t,\omega}=\int^t_0  \sigma'(\omega,t)dW'_t,
\end{equation}
where $W'_t$ is a Wiener process on a probability space $(\Omega', \rF', P')$.
Then by \cite[IV,Theorem 63]{Pro},  the process $X'_{t,\omega}$ has a  $\rF' \otimes \rF \otimes \cB([0,\infty))$-measurable version and predictable  for each fixed $\omega$.
Set
$$
v(t,x) := \int_0^t g^k(s,x)dw^k_s,
$$
$$
y=y_{\omega}(t,x) := a^{ij}(\omega,t) v_{x^ix^j}(\omega,t,x) ,
$$
and  for each $\omega$, consider the deterministic PDE
\begin{align}
						\notag
&z_t(t,x) = a^{ij}(\omega,t) z_{x^ix^j}(t,x) + y_{\omega} (t,x), \qquad (t,x) \in (0,\tau(\omega)] \times \fR^d \\
						\label{2018122310}
&z(0,x)=0.
\end{align}
Note that by the Fubini Theorem, the BDG inequality, and the  H\"older inequality,
\begin{align}
							\notag
\bE  \left[  \sup_{t \leq \tau}  \int_{\fR^d}\left| v_{xx}(t,x)  \right|^p dx   \right]
							\notag
&\leq N\bE  \left[   \int_{\fR^d}\left|  \int_0^\tau |g_{xx}(t,x)|^2_{l_2}   dt\right|^{p/2} dx   \right] \\
							\label{2019041701}
&\leq N\|g_{xx}\|^p_{\bL_p(\tau, l_2)} < \infty,
\end{align}
and similarly,
\begin{align*}
\bE  \left[  \sup_{t \leq \tau}  \int_{\fR^d}\left| v(t,x)  \right|^p dx   \right]
\leq N\|g\|^p_{\bL_p(\tau, l_2)} < \infty.
\end{align*}
Note that 
\begin{eqnarray}
\nonumber
   \int_0^\tau \|a^{ij}(t)v_{xx}(t,\cdot) \|_{L_p}  dt  &\leq&  \int_0^\tau |a(t)|dt  \cdot  \sup_{t \leq \tau} \| v_{xx}(t, \cdot)\|_{L_p}       \\   
    &\leq& M 
   \sup_{t \leq \tau} \| v_{xx}(t, \cdot)\|_{L_p}.	 \label{eqn 4.25.1}						
   \end{eqnarray}
Applying \eqref{2019041701} and  H\"older's inequality,  we have
\begin{align}
								\notag
\bE  \left[   \int_0^\tau \|a^{ij}(t)v_{xx}(t,\cdot) \|_{L_p}  dt \right]
\leq  M \left(\bE \left[    \sup_{t \leq \tau} \| v_{xx}(t, \cdot)\|^p_{L_p}  \right]  \right)^{1/p} < \infty.
\end{align}
Thus, $y_{\omega}\in L_1((0,\tau(\omega)]; L_p)$ (a.s.), and  by Remark \ref{no smooth}(i),
\begin{equation}
   \label{eqn 4.29.1}
z_{\omega}=z_{\omega}(t,x):= \int_0^t    \bE'[y_{\omega}(s,x+X'_{t,\omega}-X'_{s,\omega}) ] ds
\end{equation}
is a solution to \eqref{2018122310} such that $z_{\omega}\in C\left( [0,\tau] ; L_p \right)$, and 
$$\sup_{t\leq \tau}\|z_{\omega}\|_p\leq N\int^{\tau}_0 \|y_{\omega}(t)\|_p \,dt.
$$
This, \eqref{eqn 4.25.1},    \eqref{2019041701},  and   \eqref{eqn 4.29.1} yield that  $z$ is $\cF_t$-adapted,   $L_p$-valued predictable, and 
$$
z:=z_{\omega}(t,x)\in L_p( \Omega, \rF; C( [0,\tau] ; L_p)).
$$
Define 
\begin{align}
					\notag
u(t,x) 
&:= z(t,x) + v(t,x) \\
						\label{2018122213}
&=\int_0^t    \bE'[y_{\omega}(s,x+X'_{t,\omega}-X'_{s,\omega}) ] ds +\int_0^t g^k(s,x)dw^k_s,
\end{align}
where
$$
y(t,x) :=y_{\omega}(t,x) =a^{ij}(\omega,t) v_{x^ix^j}(\omega,t,x) .
$$
Then
\begin{align*}
u(t,x)
&= \int_0^t \left( a^{ij}(s) z_{x^ix^j}(s,x) +y(s,x)  \right)ds + v(t,x) \\
&=\int_0^t a^{ij}(s) u_{x^ix^j}(s,x) ds + \int_0^tg^k(s,x) dw_s^k.
\end{align*}
Therefore  $u$ becomes a solution to \eqref{linear eqn}  in  $L_p( \Omega,\rF; C( [0,\tau] ; L_p))$
if \eqref{20190422201} holds.

To remove the bounded condition \eqref{20190422201},  consider stopping times
$$
\tau_n :=  \inf \left\{ t \leq  \tau  :  \sum_{i,j}\int_0^t  |a^{ij}(t)|  dt > n  \right\}.
$$
Then, since  \eqref{20190422201} holds with $\tau_n$ and $n$, by the above result
 there exists a  solution $u_n$ to \eqref{linear eqn}  with $\tau_n$ in  $L_p(\Omega; C( [0,\tau_n] ; L_p)) $.
By  the uniqueness of a solution and a priori estimate \eqref{2018122210}  obtained in Part I,
$u_n = u_m$ $a.e.$ on  $\{(\omega,t) : t \in [0, \tau_n(\omega)] \}$ for all $m \geq n$, 
and
\begin{align}
							\label{2019042250}
\bE \sup_{t  \in [0,\tau_n] } \|u_n(t,\cdot)\|^p_{L_p}
\leq N(p,T) 
\bE \|g\|^p_{\bL_p(\tau,l_2)}.
\end{align}
Define
$$
\tilde u(t) := \lim_{n \to \infty} u_n(t),
$$
where the limit is the point-wise limit  on a  subset of  $\{(\omega,t) :  t \in [0, \tau(\omega))\}$.
Since $\tau_n \to \tau$ $(a.s.)$ as $n \to \infty$,  we have $\tilde u\in C([0,\tau); L_p)$ (a.s.), and $\tilde u$ becomes a (distribution-valued) solution to \eqref{linear eqn} for $t<\tau$.  Also, since $\tilde u=u_n$ for $t\leq \tau_n$, we have
$$
\sup_{t\leq \tau_n}\|u_n(t)\|^p_p=\sup_{t\leq \tau_n}\|\tilde u(t)\|^p_p,
$$
and therefore, applying Fatou's lemma to \eqref{2019042250}, we conclude
\begin{align*}
\bE \sup_{t  \in [0,\tau) } \|\tilde u(t,\cdot)\|^p_{L_p}
\leq N(p,T)
\bE\|g\|^p_{\bL_p(\tau,l_2)}.
\end{align*}
Note also that, since $u_n$ is defined as in \eqref{2018122213} for $t\leq \tau_n$, it follows that if $t<\tau$ then $\tilde u$ is equal to the right hand side of   \eqref{2018122213}, which is  adapted and continuous $L_p$-valued process on $[0,\tau]$. Therefore, we conclude that   there exists  a continuous extension $u$  which is a version of $\tilde u$ and a solution to  \eqref{linear eqn}  in  the class $L_p( \Omega; C( [0,\tau] ; L_p))$.
\smallskip

\item Second, we assume 
$$
u_0 \in \bH_c^\infty(\fR^d), \quad f \in \bH_c^\infty(\tau), \quad  g \in \bH_c^\infty( \tau , l_2).
$$
For each $\omega$, consider the equation
\begin{align}
						\notag
& z_t(t,x) =  a^{ij}(t)z_{x^ix^j}(t,x)+f(t,x)
						\label{2019032110}
\quad (t,x) \in  (0,\tau(\omega)] \times \fR^d  \\
&z(0,x)=u_0(x).
\end{align}
Take $X'_{t,\omega}$ from \eqref{eqn 4.25.3}, and define
\begin{align}
						\label{2019010610}
z_{\omega}(t,x)= \bE'[u_0(\omega,x+X'_{t,\omega})] + \int_0^t\bE'[f(\omega, s,x+X'_{t,\omega}-X'_{s,\omega} )]ds.
\end{align}
Then by Remark \ref{no smooth}(i), $z_{\omega}$ is a solution to \eqref{2019032110} and $z_{\omega}\in C\left( [0,\tau(\omega)] ; L_p \right)$ for each $\omega$.
Moreover, by the generalized Minkowski inequality,
$$
z:=z_{\omega}(t) \in L_p \left( \Omega, \rF ; C\left( [0,\tau] ; L_p \right)  \right).
$$
Moreover, due to (i), formula \eqref{2018122213} gives a unique solution $\bar{v}$ 
to the equation
\begin{align*}
& d\bar v=a^{ij}(t) \bar v_{x^ix^j}dt + g^k dw^k_t
\quad (t,x) \in  (0,\tau ] \times \fR^d  \\
&\bar v (0,x)=0
\end{align*}
in $ L_p \left( \Omega, \rF ; C\left( [0,\tau] ; L_p \right)  \right)$.
Then considering $u:= z+ \bar v$, we finally find a solution 
  to equation \eqref{linear eqn} in the class $L_p \left( \Omega, \rF ; C\left( [0,\tau] ; L_p \right)  \right)$.

\item (General Case)
Choose sequences $u_0^n \in \bH_c^\infty(\fR^d)$, $f^n \in \bH_c^\infty(\tau)$, and $g^n \in\bH_c^\infty(\tau,l_2)$ so that
$$
u^n_0 \to u_0 \quad \text{in} \quad \bL_p, \qquad f^n \to f \qquad \text{in} \quad\bL_p(\tau), \qquad g^n \to g \qquad \text{in} \quad\bL_p(\tau,l_2)
$$
as $n \to \infty$. 
Then for each $n$, by (ii) there exists a solution 
$$
u^n \in L_p \left( \Omega, \rF ; C\left( [0,\tau] ; L_p \right)  \right)
$$
to the equation 
 \begin{align*}
&d u^n (t,x)= \left(a^{ij}(t)u^n_{x^ix^j}(t,x)+f^n(t,x)  \right) dt  +(g^n)^k(t,x) dw^k_t\\
&u^n(0,x)=u^n_0(x) \qquad (\omega, t,x) \in  \Omega \times (0, \tau]  \times \fR^d .
 \end{align*}
and thus for all $n,m$
 \begin{align*}
&d (u^n -u^m) (t,x)= \left(a^{ij}(t)(u^n-u^m)_{x^ix^j}(t,x)+(f^n-f^m)(t,x)  \right) dt   \\
&\qquad  \qquad \qquad  \qquad + (g^n-g^m)^k(t,x) dw^k_t \qquad (\omega, t,x) \in  \Omega \times (0,\tau] \times \fR^d \\
&(u^n-u^m)(0,x)=(u^n_0 -u^m_0)(x)  .
 \end{align*}
\end{enumerate}
Due to   a priori estimate \eqref {2018122210}, we have
\begin{align*}
& \bE \sup_{t  \in [0,\tau] } \|(u^n-u^m)(t,\cdot)\|^p_{L_p} \\
&\leq N(p,T) \left(\|f^n -  f^m\|^p_{\bL_p(\tau)} 
+\|  g^n - g^m\|^p_{\bL_p(\tau,l_2)} + \|u^n_0 - u^m_0\|^p_{\bL_p} \right)
\end{align*}Thus $u^n$ becomes a Cauchy sequence in $L_p \left( \Omega, \rF ; C\left( [0,\tau] ; L_p \right)  \right)$ 
and by taking the limit, we have a solution   $u \in L_p \left( \Omega, \rF ; C\left( [0,\tau] ; L_p \right)  \right) $ to equation \eqref{linear eqn}.
The theorem is proved. 
\end{proof}

The results of the following remark   will  not used anywhere in this article.

\begin{remark}
						\label{2019041730}

(i)  Based on  the approximation  used in the above proof,  one can easily check that  if $u_0 \in \bL_p$, $f \in \bL_p \left(\tau\right)$, and $g \in \bH^2_p \left(\tau, l_2\right)$ then the solution  to the equation 
\begin{align*}
 du= \left( a^{ij}(t)u_{x^ix^j}+f  \right)dt + g^k dw^k_t, \quad t>0; \quad u(0,x)=u_0
\end{align*}
is given by 
\begin{align}
							\notag
u (t,x)
&=\bE'[u_0(\omega,x+X'_{t,\omega})] + \int_0^t\bE'[f(\omega,s,x+X'_{t,\omega}-X'_{s,\omega} )]ds \\
						\label{sol repre}
&\quad +\int_0^t    \bE'[y_{\omega}(s,x+X'_{t,\omega}-X'_{s,\omega}) ] ds +\int_0^t g^k(s,x)dw^k_s.
\end{align}
The additional assumption $g \in \bH^2_p \left(\tau, l_2\right)$ is needed to make sense of $y_{\omega}$ which is defined by
$$
y_{\omega}(t,x) := a^{ij}(t) \int_0^t g_{x^ix^j}^k(s,x)dw^k_s.
$$
%

(ii)
If coefficients $a^{ij}(t)$ are not random, then for any $u_0 \in \bL_p$, $f \in \bL_p \left(\tau\right)$, and $g \in \bL_p \left(\tau, l_2\right)$ then   solution to the equation 
\begin{align*}
 du=  (a^{ij}(t)u_{x^ix^j}+f) \, dt + g^k \,dw^k_t, \quad t>0; \quad u(0,\cdot)=u_0
\end{align*}
is given by
\begin{align}
						\notag
u(t,x) 
&=  \bE'[u_0(x+X'_t)] + \int_0^t\bE'[f(s,x+X'_t-X'_s) ] ds  \\
						\label{eqn 4.25.5}
&\qquad + \int_0^t\bE'[g^k(s,x+X'_t-X'_s) ] dw^k_s.
\end{align}
%
Actually this is a well known  result if the coefficients are bounded  and have uniform ellipticity condition. 
The  general case can be proved based on  Ito's formula. 
For simplicity, we  only consider the case $u_0=0$ and $f=0$.  Considering an approximation argument we may assume $g\in \bH^2_p(\tau,l_2)$. This is possible because there are no derivatives of $g$ in formula \eqref{eqn 4.25.5}.

Using \eqref{sol repre} and applying the integration by parts, the Fubini Theorem, and the stochastic Fubini theorem, we have
\begin{align}
						\notag
& \int_{\fR^d} \int_0^t    \bE'[y(s,x+X'_t-X'_s) ] ds\phi(x) dx  \\
						\notag
&=\int_0^t  \int_{\fR^d} a^{ij}(s)  \bE' \left[ \int_0^s g^k(r,x +X'_t-X'_s)  dw^k_r \right]  \phi_{x^i x^j}(x) dx ds  \\
						\notag
&= \int_0^t \int_0^t  1_{r<s} \int_{\fR^d} a^{ij}(s)  \bE' \left[  g^k(r,x +X'_t-X'_s)  \right]  \phi_{x^i x^j}(x) dx ds   dw^k_r \\
						\label{2018122301}
&= \int_0^t  \int_{\fR^d} \int_r^t  a^{ij}(s)  \bE' \left[  g_{x^i x^j}^k(r,x +X'_t-X'_s)  \right] ds \phi(x) dx    dw^k_r 
\end{align}
for all $t \in [0,\tau]$ $(a.s.)$.
By It\^o's formula (cf. Remark \ref{no smooth}(ii)), for all $t \geq r$ and $\omega$, we have
\begin{align}
						\label{2019041760}
\bE'[g^k(r,x+X'_t-X'_r)]
=  g^k(r,x) +\int_r^t a^{ij}(s)  \bE'\left[g^k_{x^ix^j}(r,x+X'_t-X'_s) \right]ds.
\end{align}
Thus from \eqref{2018122301}, \eqref{2019041760}, and the stochastic Fubini theorem, we have
\begin{align*}
& \int_{\fR^d} u(t,x) \phi(x)dx \\
& \int_{\fR^d} \left(\int_0^t    \bE'[y(s,x+X'_t-X'_s) ] ds +\int_0^t g^k(s,x)dw^k_s\right)\phi(x) dx  \\
&=\int_{\fR^d} \int_0^t  \left(\bE'[g^k(r,x+X'_t-X'_r)] \right) dw_r^k\phi(x)dx 
\end{align*}
for all $t\leq \tau$ (a.s.). Hence, the claim is proved. 
\end{remark}


From now on, we focus on higher regularity of solution to  equation \eqref{linear eqn}. 

\begin{lemma} \label{lemma 4.26.1}
Suppose there are constants $\kappa, M>0 $ such that 
\begin{align}
					\label{strong 2}
|a^{ij}(t)|   \leq M,   \quad \forall\, \omega\in \Omega,t >0
\end{align}
and
\begin{align}
				\label{strong 1}
 a^{ij}(t) \xi^i \xi^j \geq  \kappa |\xi|^2, \quad \forall  \omega\in \Omega, t>0,\xi \in \fR^d.
\end{align}
Let $p \geq 2$, $\tau$ be a stopping time,  $u_0\in \bB^{2-2/p}_p$, $f\in \bL_p(\tau)$ and $g\in \bH^1_p(\tau,l_2)$.  Then equation \eqref{linear eqn} has a unique solution $u\in \cap_{T>0} L_p \left( \Omega, \rF ; C\left( [0,\tau \wedge T] ; L_p \right)  \right)$, and for this solution we have  
\begin{align}
				\label{classical est}
\|u_{xx}\|_{\bL_p( \tau)} \leq N \left(  \|u_0\|_{ \dot \bB_p^{2-2/p}} + \|f\|_{\bL_p( \tau )} + \|g_x\|_{\bL_p( \tau,l_2)} \right),
\end{align}
where   $N=N(d,p,\kappa, M)$ is independent of $\tau$.
\end{lemma}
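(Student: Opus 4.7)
The plan is to reduce the lemma to the classical $L_p$-theory for SPDEs with bounded, uniformly elliptic coefficients (due to Krylov~\cite{Krylov1999}) combined with the parabolic trace embedding for the initial data. Existence and uniqueness in $L_p(\Omega,\rF;C([0,\tau\wedge T];L_p))$ for each $T>0$ is a direct consequence of Theorem~\ref{thm 20181222}, so the only real task is to establish \eqref{classical est}.

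First I would use linearity to split $u=u^{(0)}+u^{(f)}+u^{(g)}$, where $u^{(0)}$ solves the homogeneous stochastic equation with data $(u_0,0,0)$, $u^{(f)}$ solves the deterministic forward problem with data $(0,f,0)$, and $u^{(g)}$ solves the additive stochastic equation with data $(0,0,g)$. Theorem~\ref{thm 20181222} guarantees that each piece lives in the required space, and the triangle inequality reduces \eqref{classical est} to three separate estimates.

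For $u^{(f)}$: for each $\omega$, this is a deterministic second-order parabolic equation with bounded measurable-in-$t$, uniformly elliptic coefficients, so the standard parabolic $L_p$-regularity (e.g.\ Krylov's analytic approach in~\cite{Krylov1999}) gives
\[
\|u^{(f)}_{xx}(\omega,\cdot)\|_{L_p((0,\tau(\omega))\times\fR^d)}
\le N(d,p,\kappa,M)\,\|f(\omega,\cdot)\|_{L_p((0,\tau(\omega))\times\fR^d)}
\]
pointwise in $\omega$; taking $\bE$ yields the $f$-term. For $u^{(g)}$: this is precisely the model equation treated by Krylov's $L_p$-theory for SPDEs with bounded, uniformly elliptic leading coefficients and zero drift on the noise term, which gives
\[
\|u^{(g)}_{xx}\|_{\bL_p(\tau)}\le N(d,p,\kappa,M)\,\|g_x\|_{\bL_p(\tau,l_2)},
\]
and which is valid when the coefficients depend only on $(\omega,t)$ and are merely predictable and bounded. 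For $u^{(0)}$: using the representation $u^{(0)}(t,x)=\bE'[u_0(x+X'_{t,\omega})]$ from Remark~\ref{no smooth}(i), the parabolic trace embedding for the heat-type semigroup generated by the frozen coefficients $a^{ij}(\omega,t)$ gives, pointwise in $\omega$,
\[
\|u^{(0)}_{xx}(\omega,\cdot)\|_{L_p((0,\infty)\times\fR^d)}\le N(d,p,\kappa,M)\,\|u_0(\omega,\cdot)\|_{\dot B^{2-2/p}_p};
\]
this is the standard $L_p$-trace characterization of $\dot B^{2-2/p}_p$ as the initial-value space for parabolic $L_p$-regularity, proved by Littlewood-Paley decomposition together with the dyadic self-similarity in Remark~\ref{rem homo}(i).

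The main obstacle is the estimate for $u^{(0)}$: the trace estimate is classical when $a^{ij}$ is constant, but here the coefficients are random and only measurable in $t$, so the usual semigroup arguments do not apply directly. I would handle this either by reproducing the Littlewood-Paley argument — applying $\Delta_j$ to the equation and using the uniform ellipticity to get exponential decay $\|\Delta_j u^{(0)}(t,\cdot)\|_{L_p}\lesssim e^{-c\kappa 2^{2j}t}\|\Delta_j u_0\|_{L_p}$ via Fourier multiplier estimates that are uniform in the $a^{ij}$'s (using the frozen-coefficient heat kernel on each dyadic annulus) — and then summing via \eqref{Besov}, or by appealing to the parabolic-trace results embedded in Krylov's framework, whose constants depend only on the ellipticity and boundedness constants $\kappa,M$. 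The uniformity of these constants in $\omega$ is what makes the $\omega$-pointwise estimate integrate cleanly to \eqref{classical est}.
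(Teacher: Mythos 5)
Your decomposition $u=u^{(0)}+u^{(f)}+u^{(g)}$ and the treatment of the $f$- and $g$-parts are fine (the paper handles these two together by citing \cite[Theorem 2.1]{Krylov2000}, which already covers coefficients merely predictable and measurable in $(\omega,t)$, bounded and elliptic). The problem is the piece you yourself identify as the main obstacle, $u^{(0)}$: the trace estimate $\|u^{(0)}_{xx}\|_{L_p((0,\infty)\times\fR^d)}\le N\|u_0\|_{\dot B^{2-2/p}_p}$ for the homogeneous equation with coefficients $a^{ij}(\omega,t)$ measurable in $t$ is exactly the sharp point of the lemma, and your sketch does not close it. The dyadic decay $\|\Delta_j u^{(0)}(t)\|_{L_p}\lesssim e^{-c\kappa 2^{2j}t}\|\Delta_j u_0\|_{L_p}$ is indeed available (Gaussian kernel with covariance between $2\kappa t I$ and a multiple of $Mt I$), but "summing via \eqref{Besov}" is not routine: the Littlewood--Paley square function together with Minkowski only controls $\|u^{(0)}_{xx}(t)\|_{L_p}$ by an $\ell_2$-sum of the $\|\Delta_j\cdot\|_{L_p}$, which after time integration produces the $\dot B^{2-2/p}_{p,2}$ norm of $u_0$; for $p>2$ this norm is \emph{larger} than the $\dot B^{2-2/p}_{p,p}$ norm appearing in \eqref{classical est}, so the claimed bound does not follow, and one needs the genuinely more careful classical trace argument (Hardy-type inequalities in $t$, or the real-interpolation characterization of $B^{2-2/p}_{p,p}$). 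Your fallback, "appealing to the parabolic-trace results embedded in Krylov's framework," also fails for the stated lemma: as the paper points out explicitly, Krylov's theory takes initial data in $\bH^{2-2/p}_p$, which by Remark \ref{rem homo}(ii) is strictly smaller than $\bB^{2-2/p}_p$ when $p>2$; upgrading to the Besov class is precisely what this lemma adds, so it cannot be quoted from that framework.

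For comparison, the paper avoids any trace theory for the variable-coefficient operator by a simpler subtraction: it takes $v$ solving the deterministic heat equation $dv=\Delta v\,dt$, $v(0,\cdot)=u_0$, for which the classical result in \cite{LA} (applied $\omega$-wise) gives $\|v_{xx}\|_{\bL_p(\tau)}\le N(d,p)\|u_0\|_{\dot\bB^{2-2/p}_p}$, and then observes that $\bar u=u-v$ solves \eqref{linear eqn} with zero initial data and right-hand side $\bar f=a^{ij}v_{x^ix^j}-\Delta v+f$, which lies in $\bL_p(\tau)$ thanks to the bound $|a^{ij}|\le M$; Step 1 (the zero-initial-data estimate from \cite{Krylov2000}) then finishes the proof. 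If you want to keep your route, you must either reproduce the full constant-coefficient-style trace argument for time-measurable coefficients (with the correct $\ell_p$ summation), or adopt the paper's subtraction trick, which reduces the Besov ingredient to a standard deterministic constant-coefficient fact.
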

\begin{proof}
The existence and uniqueness are consequence of Theorem \ref{thm 20181222}. Estimate \eqref{classical est} was proved
 by Krylov (\cite{Krylov1999,Krylov2000}), however we give some details below because Krylov used $ \bH^{2-2/p}_p$ for the space of  initial data in place of  $\bB^{2-2/p}_p$. 

 Step 1.  Let $u_0=0$.   Then,  by  \cite[Theorem 2.1]{Krylov2000},  for any $T>0$,
   \begin{equation}
   \label{eqn 4.25.6}
\|u_{xx}\|_{\bL_p(\tau \wedge T)} \leq N(d,p,\kappa , M) \left(  \|f\|_{\bL_p(\tau \wedge T)}  +\|g_x\|_{\bL_p( \tau \wedge T,l_2)} \right),
\end{equation}
and thus one gets  \eqref{classical est} by taking $T\to \infty$.  

Step 2. In general,  take a solution  $v$ (cf. \cite[Theorem 5.1]{Krylov1999}) to equation   
$$
dv=\Delta v\,dt, \quad t>0; \quad v(0,x)=u_0
$$
such that $v\in \bH^2_p(\tau \wedge T) \cap L_p \left( \Omega, \rF ; C\left( [0,\tau \wedge T] ; L_p \right)  \right)$ for any $T>0$. Then using  a classical result in PDE (see e.g. \cite{LA} ) for each $\omega$, 
$$
 \|v_{xx}\|^p_{L_p(\tau \wedge T)}\leq N(d,p)\|u_0\|^p_{\dot B_p^{2-2/p}}.
 $$
 Thus, taking the expectation and  letting $T\to \infty$, we get 
 \begin{equation}
    \label{eqn 4.25.7}
 \|v_{xx}\|^p_{\bL_p(\tau)}\leq N(d,p)\|u_0\|^p_{ \dot \bB_p^{2-2/p}}.
 \end{equation}
 Finally, note that $\bar{u}:=u-v \in  \cap_{T>0} L_p \left( \Omega, \rF ; C\left( [0,\tau \wedge T] ; L_p \right)  \right)$ and satisfies
 $$
 d\bar{u}=(a^{ij}\bar{u}_{x^ix^j}+\bar{f})dt+ g^k dw^k_t, \quad t>0; \quad \bar{u}(0,x)=0,
 $$
 where $\bar{f}:=a^{ij}v_{x^ix^j}-\Delta v+f$.  By the result of Step 1 and \eqref{eqn 4.25.7},
 \begin{eqnarray*}
 \|u_{xx}\|_{\bL_p(\tau)} &\leq& \|\bar{u}_{xx}\|_{\bL_p(\tau)}+\|v_{xx}\|_{\bL_p(\tau)}\\
 &\leq& N(d,p,\kappa,M) \left( \|\bar{f}\|_{\bL_p(\tau)}+\|g_x\|_{\bL_p( \tau,l_2)} \right) +\|v_{xx}\|_{\bL_p(\tau)}\\
 &\leq&N(d,p,\kappa,M) \left( \|f\|_{\bL_p(\tau)}+\|g_x\|_{\bL_p( \tau,l_2)} +\|u_0\|_{\dot{\bB}_p^{2-2/p}} \right).
 \end{eqnarray*}
  The lemma is proved.
 \end{proof}

In the following lemma we show that the boundedness of coefficients is not needed for estimate  \eqref{classical est}.
 \begin{lemma}
				    \label{lemma classic}
Let $p \in [2,\infty)$,  $\tau$ be a stopping time, 
 $u_0 \in \bB_p^{2 \left(1-1/ p \right)}$, $f\in \bL_p(\tau)$, $g \in \bH_p^1(\tau,l_2)$, and $u \in \bigcap_{T>0} L_p \left( \Omega, \rF ; C\left( [0,\tau \wedge T] ; L_p \right)  \right)$ be a solution to  \eqref{linear eqn}.
Assume that \eqref{strong 1} and \eqref{2018122801} hold and coefficients $a^{ij}$  are predictable for all $i, j$.
Then there exists a positive constant $N=N(d,p)$ such that
 \begin{align}
					\label{constant}
 \|u_{xx}\|_{\bL_p(\tau)}
 \leq N   \left(  \kappa^{-1/p}\|u_0\|_{ \dot \bB_p^{2 \left(1-1/ p \right)}}  +  \kappa^{-1} \|f\|^p_{\bL_p(\tau)}  + \kappa^{-1/2} \|g_x\|_{\bL_p(\tau,l_2)} \right).
 \end{align}
  \end{lemma}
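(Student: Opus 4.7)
The plan is to reduce the estimate to the case $\kappa=1$ via a Brownian scaling argument, and then to handle the possibly unbounded coefficients for $\kappa=1$ by truncating and invoking Lemma~\ref{lemma 4.26.1}. Throughout, by first replacing $\tau$ with $\tau\wedge T$ and letting $T\to\infty$ (both sides of \eqref{constant} are monotone in $\tau$), we may assume that $\tau$ is bounded, so Theorem~\ref{thm 20181222} is directly applicable.

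\textbf{Step 1 (Scaling to $\kappa=1$).} I would define $v(t,x):=u(t/\kappa,x)$ and $\tilde w^k_t:=\sqrt{\kappa}\,w^k_{t/\kappa}$; the latter is a standard Brownian motion with respect to the time-changed filtration $\tilde\rF_t:=\rF_{t/\kappa}$. Using the stochastic-integral identity $\int_0^{t/\kappa} h(s)\,dw^k_s = \kappa^{-1/2}\int_0^t h(r/\kappa)\,d\tilde w^k_r$ together with the analogous change of variable in the drift, $v$ solves an equation of the form \eqref{linear eqn} on $[0,\kappa\tau]$ with coefficients $\tilde a^{ij}(t):=a^{ij}(t/\kappa)/\kappa$ (ellipticity constant $1$), forcing $\tilde f(t,x):=f(t/\kappa,x)/\kappa$, noise $\tilde g^k(t,x):=g^k(t/\kappa,x)/\sqrt{\kappa}$, and initial datum $u_0$. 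Elementary computations yield
\[
\|v_{xx}\|_{\bL_p(\kappa\tau)}=\kappa^{1/p}\|u_{xx}\|_{\bL_p(\tau)},\ \ \|\tilde f\|_{\bL_p(\kappa\tau)}=\kappa^{1/p-1}\|f\|_{\bL_p(\tau)},\ \ \|\tilde g_x\|_{\bL_p(\kappa\tau,l_2)}=\kappa^{1/p-1/2}\|g_x\|_{\bL_p(\tau,l_2)},
\]
while $\|v(0,\cdot)\|_{\dot{\bB}_p^{2-2/p}}=\|u_0\|_{\dot{\bB}_p^{2-2/p}}$. Hence it suffices to prove the estimate with constant $N(d,p)$ in the case $\kappa=1$; dividing by $\kappa^{1/p}$ then produces exactly the $\kappa^{-1/p},\kappa^{-1},\kappa^{-1/2}$ factors appearing in \eqref{constant}.

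\textbf{Step 2 (Approximation when $\kappa=1$).} For each $n\in\bN$ I would introduce the predictable truncated coefficients
\[
a^{ij}_n(t):=a^{ij}(t)\,\mathbf{1}_{|a(t)|\leq n}+\delta^{ij}\,\mathbf{1}_{|a(t)|>n},
\]
which satisfy $|a^{ij}_n(t)|\leq n\vee 1$ and still satisfy the ellipticity condition with $\kappa=1$. Let $u^n$ denote the unique solution to \eqref{linear eqn} with $a^{ij}_n$ in place of $a^{ij}$ (existence and uniqueness from Theorem~\ref{thm 20181222}). By Lemma~\ref{lemma 4.26.1} applied to $u^n$,
\[
\|u^n_{xx}\|_{\bL_p(\tau)}\leq C_n\bigl(\|u_0\|_{\dot{\bB}_p^{2-2/p}}+\|f\|_{\bL_p(\tau)}+\|g_x\|_{\bL_p(\tau,l_2)}\bigr),
\]
with $C_n=N(d,p,1,n\vee 1)$. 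By local integrability \eqref{2018122801} we have $a^{ij}_n\to a^{ij}$ a.e.\ and in $L^1((0,\tau)\times\Omega)$, so an application of the $L_p$ a priori estimate in Theorem~\ref{thm 20181222} to $u^n-u$ yields $u^n\to u$ in $L_p(\Omega;C([0,\tau];L_p))$; weak lower semicontinuity of the $\bL_p$-seminorm on second derivatives then transfers the bound to $u$.

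\textbf{Main obstacle.} The crucial nontrivial point is that $C_n$ is bounded uniformly in $n$, i.e.\ that the Lemma~\ref{lemma 4.26.1} constant can be taken independent of the coefficient upper bound $M$. Although that lemma is stated with $N=N(d,p,\kappa,M)$, when the coefficients depend only on $(\omega,t)$ (no $x$-dependence) the underlying Krylov result in \cite{Krylov2000} is, at its heart, a Calder\'on-Zygmund estimate for the Fourier multipliers
\[
m_{s,t}(\xi):=\xi_i\xi_j\exp\!\Bigl(-\int_s^t a^{k\ell}(r)\xi_k\xi_\ell\,dr\Bigr),
\]
whose $L_p\to L_p$ operator norm is controlled purely by the lower ellipticity bound. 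Pinning down this $M$-uniform estimate — either by inspecting Krylov's proof or by rerunning the Fourier/kernel analysis directly using the $x$-independence of $a^{ij}$ — is the one nonroutine ingredient; once secured, passing $n\to\infty$ closes the $\kappa=1$ case, and the scaling in Step~1 delivers \eqref{constant} in general.
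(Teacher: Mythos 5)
Your Step 1 (reduction to $\kappa=1$ by parabolic time scaling, with the correct bookkeeping of the powers $\kappa^{-1/p},\kappa^{-1},\kappa^{-1/2}$) is fine and is essentially equivalent to the paper's spatial rescaling. The problem is Step 2: the entire content of the lemma is precisely that the constant does not depend on an upper bound of the coefficients (indeed none exists), and your truncation argument reduces this to the claim that the constant $N(d,p,\kappa,M)$ in Lemma \ref{lemma 4.26.1} can be taken independent of $M$. You explicitly leave this "$M$-uniform estimate" unproven, asserting that Krylov's result in \cite{Krylov2000} is "at heart" a multiplier estimate controlled by the lower ellipticity bound alone. That assertion is exactly the nontrivial statement to be established, and it is not obvious: the cited results give constants depending on both $\kappa$ and $M$, and the stochastic part of the estimate is not a single Mikhlin multiplier bound. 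So as written the proposal is circular at its key point. There is also a secondary gap in the limiting argument: $u^n-u$ solves an equation whose forcing term is $(a^{ij}-a^{ij}_n)u_{x^ix^j}$, so applying the a priori estimate of Theorem \ref{thm 20181222} to conclude $u^n\to u$ in $L_p(\Omega;C([0,\tau];L_p))$ presupposes control of $u_{xx}$ (or at least $p$-th power integrability of $(a-a_n)u_{xx}$), which is what you are trying to prove; stability in the coefficients is not furnished by that theorem.

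For comparison, the paper avoids any $M$-uniform variable-coefficient estimate altogether. It writes $A(t)=\bar A(t)+\tfrac{\kappa}{2}I$ with $\bar A\geq 0$, takes $\bar\sigma$ with $\bar\sigma^2=2\bar A$ and an auxiliary Brownian motion $W'$ on an independent probability space, and sets $X_t=\int_0^t\bar\sigma\,dW'_s$. It then solves the constant-coefficient equation $dv=(\tfrac{\kappa}{2}\Delta v+f(t,x+X_t))dt+g^k(t,x+X_t)dw^k_t$, for which Step 1 (constant coefficients, $M=\kappa/2$) gives the bound with $N(d,p)$ and the shifted data have unchanged norms by translation invariance. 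Undoing the shift, $z(t,x)=v(t,x-X_t)$ solves, via the It\^o--Wentzell formula, the original equation plus a martingale term in $dW'$; taking conditional expectation with respect to the original filtration kills that term, and by uniqueness (Theorem \ref{thm 20181222}) the conditional expectation equals $u$, so Jensen and translation invariance give $\|u_{xx}\|_{\bL_p(\tau)}\leq\|v_{xx}\|$. If you want to salvage your route, you would have to actually prove the $M$-independent version of Lemma \ref{lemma 4.26.1} (or reproduce the paper's auxiliary-noise argument), since nothing in the quoted literature hands it to you.
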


  \begin{proof}
Due to the approximation used in Theorem \ref{thm 20181222}, we may assume 
$$
u_0 \in \bH_c^\infty(\fR^d),  \quad f \in \bH_c^\infty(\tau), \quad \text{and} \quad  g \in\bH_c^\infty(\tau,l_2).
$$
We use the idea in the proof of \cite[Theorem 4.10]{Krylov1999}.
\vspace{3mm}

{\bf Step 1}.  Assume  $A(t)=(a^{ij}(t)) = \frac{\kappa}{2}  I_{d\times d}$, where  $I_{d\times d}$ is the $d \times d$ identity matrix.
Thus,  $u$ is a solution to
\begin{align*}
 du(t,x) = \left(\frac{\kappa}{2} \Delta u + f \right)dt +   g^k\,dw^k_t, \quad 0< t\leq \tau; \quad u(0,x)=u_0. 
\end{align*}
Define 
$$
\bar v(t,x) = u(t,   \sqrt \kappa x).
$$
Then $\bar v$ satisfies
\begin{align*}
 d\bar v = \left(  \frac{1}{2} \Delta \bar v (t,x)+ f( t, \sqrt \kappa x)  \right)dt +  g^k(t, \sqrt \kappa x) dw^k_t, \quad  0<t\leq \tau
 \end{align*}
 with initial data
$\bar v(0,x)=u_0( \sqrt \kappa x)$. 
Since $ \frac{1}{2} I_{d \times d}$ satisfies both \eqref{strong 1} and \eqref{strong 2} with $\kappa=M=\frac{1}{2}$,  by \eqref{classical est} applied to $\bar{v}$,   we get
$$
\kappa \|u_{xx}\|_{\bL_p (\tau)} \leq N(d,p) \left(\|f\|_{\bL_p(\tau)}+ \kappa^{1/2}\|g_x\|_{\bL_p(\tau,l_2)}+\kappa^{1-1/p}\|u_0\|_{\dot{\bB}^{2-2/p}_p}\right),
$$
which certainly leads to \eqref{constant}.

\smallskip

{\bf Step 2}. (General case).

Let  $W'_t$ be a $d$-dimensional  Wiener process on a probability space $(\Omega', \rF', P')$ different from $(\Omega, \rF, P)$.
Consider the product probability space $(\Omega \times \Omega' , \rF \times \rF', P \times P' )$. Denote
$$
\bar{\rF}_t:=\{A \times \Omega': A \in \rF_t\}, \quad \bar{\sigma}(W'_s:s\leq t):=\{\Omega \times B: B\in \sigma(W'_s:s\leq t)\},
$$
and by $\hat{\rF}_t$ we denote the smallest $\sigma$-field on $\Omega\times \Omega'$ containing above two $\sigma$-fields, that is 
$$
\hat \rF_t := \bar{\rF}_t   \vee\bar{\sigma}(W'_s : s \leq t ).
$$
 Considering $\cap_{s>t}\hat{\rF}_s$ in place of $\hat{\rF}_t$, we may assume that $\hat \rF$ satisfies  the usual condition.  
 For a stopping time $\hat{\tau}$ relative to $\hat{\rF}_t$, we define the corresponding Banach spaces $\hat{\bH}^{\gamma}_p(\hat{\tau})$, $\hat{\bL}_{p}(\hat{\tau})$, 
 $\hat{\bH}^{\gamma}_p(\hat{\tau},l_2)$,   $\hat{\bL}_{p}(\hat{\tau}, l_2)$, $\hat{\bB}^{\gamma}_p$, and $\hat{ \dot \bB}^{\gamma}_p$.  
 Then, since any stochastic process defined on $\Omega$ can be considered as a stochastic processes on $\Omega \times \Omega'$,   for any stopping $\tau$ relative to $\rF_t$ we have 
   $$
 \bH^{\gamma}_p(\tau)\subset \hat{\bH}^{\gamma}_p(\tau), \quad  \bH^{\gamma}_p(\tau, l_2)\subset \hat{\bH}^{\gamma}_p(\tau, l_2), \quad \bB^{\gamma}_p \subset \hat{\bB}^{\gamma}_p,
\quad \dot \bB^{\gamma}_p \subset \hat{ \dot \bB}^{\gamma}_p .
 $$
 
 Write
$$
A(t) = (a^{ij}(t)), \quad 
 \bar A(t) := (\bar a^{ij}(t)) := \left(a^{ij}(t) - \frac{\kappa}{2} \delta^{ij}\right),
 $$
where $\delta^{ij}$ denotes the Kronecker delta.  
Then
  $$
  A(t)= \left(A(t)-\frac{\kappa}{2}I_{d\times d} \right)+\frac{\kappa}{2}I_{d\times d}= \bar{A}(t)+\frac{\kappa}{2}I_{d\times d}.
  $$ 
  Recall that $W'_t(\omega')$ and $w^k_t(\omega)$ can be considered as Wiener processes relative to $\hat{\rF}_t$.  Take a $d\times d$ symmetric matrix $\bar{\sigma}(\omega,t)$ such that $2\bar{A}=(\bar{\sigma})^2$.  Then, since $\bar{\sigma}$ can be considered as a predictable process defined on $\Omega\times \Omega'\times [0,\infty)$, we can define the stochastic integral
$$
X_{t}:=\int^t_0 \bar \sigma(s)\,dW'_s.
$$
Since $L_p$-norms are translation invariant,  we have
$$
f(t,x+X_t)\in \hat{\bL}_p(\tau), \quad g(t,x+X_t)\in \hat{\bH}^1_p(\tau,l_2).
$$
 Therefore,  by Lemma \ref{lemma 4.26.1}, the equation
\begin{align}
							\notag
 dv(t,x) = \left( \frac{\kappa}{2} \Delta v (t,x)+ f\left(t,x +X_t \right)  \right)dt +   g^k\left(t,x+X_t\right) dw^k_t, \quad 0<t\leq \tau
\end{align}
with initial data $v(0,x)=u_0$ has a unique solution 
\begin{equation}
   \label{eqn 4.26.9}
v\in \bigcap_{T>0} L_p(\Omega\times \Omega', \rF \times \rF'; C([0,\tau \wedge T];L_p),
\end{equation}
 and  for this solution we have
\begin{eqnarray}
	\nonumber						
&&\|v_{xx}\|_{\hat{\bL}_p (  \tau)} 
\leq N  \left(  \kappa^{-1/p} \|u_0\|_{\hat{ \dot \bB}_p^{2-2/p}} +  \kappa^{-1}\|f\|_{\hat{ \bL}_p(\tau)} +  \kappa^{-1/2}\|g_x\|_{\hat{\bL}_p(\tau,l_2)} \right)\\
&& \quad\quad\quad =N  \left(  \kappa^{-1/p} \|u_0\|_{ \dot \bB_p^{2-2/p}} +  \kappa^{-1}\|f\|_{\bL_p(\tau)} +  \kappa^{-1/2}\|g_x\|_{\bL_p(\tau,l_2)} \right).
\label{2019041821}
\end{eqnarray}
Next, set 
$$
z(t,x):= v\left(t,x - X_t \right).
$$
Then, using \eqref{eqn 4.26.9} and  $L_p$-continuity (i.e. $\lim_{|y|\to 0} \|h(x-y)-h(x)\|_p=0$), we conclude
$$
z\in \bigcap_{T>0} L_p(\Omega\times \Omega', \rF \times \rF'; C([0,\tau \wedge T];L_p).
$$
By the It\^o-Wentzell formula (cf. \cite[Theorem 1.1]{Krylov2011}), $z$ satisfies the equation
\begin{eqnarray}
								\nonumber 
 dz(t,x) &=& \left( a^{ij}(t)z_{x^ix^j} (t,x)+ f(t,x)  \right)dt +   g^k(t,x) dw^k_t \\ &&  \,  -z_{x^i}(t,x) \bar {\sigma}^{ij} dW'^{j}_t, \quad   t<\tau ; \quad z(0,x)=u_0.
  \label{2019041810} 
  \end{eqnarray}
Let $ \hat{ \bE} \left[ \cdot | \bar{ \rF}_t \right]$ denote the conditional expectation with respect to $\bar{\rF}_t$. Note  that
\begin{equation}
   \label{eqn 4.26.7}
\hat{\bE}\left[\int^t_0 z_{x^i}(s,x)\bar{\sigma}^{ij} dW'^j_s | \bar{\rF}_t \right]=0
\end{equation}
because  the process $W'_t$ is independent of $(\bar{\rF}_r)_{r>0}$.  Denote
\begin{equation}
   \label{eqn 4.26.10}
\bar{u}(t):= \hat{ \bE} \left[ z(t) | \bar{ \rF}_t \right] \quad  \in \,\, \bigcap_{T>0} L_p(\Omega\times \Omega', \rF \times \rF'; C([0,\tau \wedge T];L_p).
\end{equation}
The inclusion above is due to  conditional Jensen's inequality. 
 Then, by  \cite[Theorem 1.4.7]{Roz}, for each $t$, 
 \begin{eqnarray}
 \nonumber
 \hat{ \bE} \left[ \int^t_0 a^{ij}(s)z_{x^ix^j}(s) \,ds | \bar{ \rF}_t \right]   &=& \int^t_0   \hat{ \bE} \left[  a^{ij}(s) z_{x^ix^j}(s) | \bar{ \rF}_s \right] \,ds \quad (a.s.)\\
 &=& \int^t_0 a^{ij}(s)\bar{u}_{x^ix^j}(s) \,ds \quad (a.s.).  \label{eqn 4.26.8}
 \end{eqnarray}
Thus,  taking the conditional expectation  to equation \eqref{2019041810}  with respect to $\bar{\rF}_t$ and using  \eqref{eqn 4.26.7}, \eqref{eqn 4.26.8}, and  \eqref{eqn 4.26.10}, we conclude  that  $\bar{u}$ satisfies
\begin{align*}
& d\bar u(t,x) = \left( a^{ij}(t)\bar u_{x^ix^j} (t,x)+ f(t,x)  \right)dt +   g^k\left(t,x\right) dw^k_t, \quad 0<t\leq \tau   \\
&\bar u (0,x)=u_0(x).
\end{align*}

In other words, both $u$ and $\bar u$ are  solutions to \eqref{linear eqn} in the class $\cap_{T>0} L_p(\Omega\times \Omega', \rF \times \rF'; C([0,\tau \wedge T];L_p)$.
By the uniqueness result of  Theorem \ref{thm 20181222}, we get $u=\bar{u}$. Therefore,
\begin{align*}
\|u_{xx} \|_{\bL_p} =\|u_{xx}\|_{\hat{\bL}_p(\tau)}
=\|\bar u_{xx} \|_{\hat{\bL}_p(\tau)}= \|z_{xx} \|_{\hat{\bL}_p(\tau)} = \|v_{xx} \|_{\hat{\bL}_p(\tau)} .
\end{align*}
This and \eqref{2019041821} finish the proof of the lemma.
   \end{proof}

\begin{lemma}
						\label{2019011301}
 Let $p \in [2,\infty)$, $\tau$ be a stopping time, $u_0 \in \bB_p^{2 \left(1-1/ p \right)}$,  $f\in  \bL_p(\tau,\delta^{1-p})$, 
 $g\in \bH^1_p(\tau, \delta^{1-p/2}, l_2)$, 
 and $u \in\bigcap_{T>0} L_p \left( \Omega, \rF ; C\left( [0,\tau \wedge T] ; L_p \right)  \right)$ be a solution to equation \eqref{linear eqn}. 
Assume that coefficients $a^{ij}(t)$ are predictable, 
\begin{align}
							\label{2019012610}
\int_0^\tau  |a^{ij}(t)| dt < \infty ~(a.s.)
\end{align}
for all $i, j \in \{1,\ldots,d\}$, and
 $$
 a^{ij}(t) \xi^i \xi^j \geq 0, \quad \forall  (\omega, t,\xi) \in  \Omega \times (0,\infty) \times \fR^d.
 $$
Then
\begin{align}
									\label{2019011110}
\|u_{xx}\|_{\bL_p(\tau,\delta)}  \leq N(d,p) \left(  \|u_0\|_{\dot \bB_p^{2 \left(1-1/ p \right)}}  +  \|  f\|_{\bL_p( \tau,\delta^{1-p} )}  + \|  g_x\|_{\bL_p( \tau,\delta^{1-p/2},l_2)} \right),
\end{align}
where $\delta(t)$ is the smallest eigenvalue of the matrix $\left(a^{ij}(t)\right)$.
       
\end{lemma}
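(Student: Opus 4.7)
The plan is to transform the degenerate SPDE into a standard non-degenerate heat equation via an It\^o--Wentzell substitution followed by a random time change, and then invoke Lemma \ref{lemma 4.26.1}.

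I first reduce to smooth data via the density argument used in the proof of Theorem \ref{thm 20181222}. Then, for $\varepsilon>0$, I regularize the coefficients by $a^{ij}_\varepsilon:=a^{ij}+\varepsilon\delta^{ij}$; the smallest eigenvalue becomes $\delta_\varepsilon(t):=\delta(t)+\varepsilon\geq\varepsilon$, so by Lemma \ref{lemma classic} the corresponding solution $u_\varepsilon$ is smooth and in $L_p(\Omega;C([0,\tau];H^2_p))$. The goal is to prove \eqref{2019011110} for $u_\varepsilon$ with $\delta_\varepsilon$ replacing $\delta$ and constants independent of $\varepsilon$, then pass to the limit.

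Following Step 2 of the proof of Lemma \ref{lemma classic}, I set $\bar A_\varepsilon(t):=(a^{ij}_\varepsilon(t)-\delta_\varepsilon(t)\delta^{ij})$, which is symmetric and nonnegative; let $\bar\sigma_\varepsilon:=\sqrt{2\bar A_\varepsilon}$, and on a product probability space define $X'_t:=\int_0^t\bar\sigma_\varepsilon(s)\,dW'_s$. Let $v_\varepsilon$ solve
\begin{equation*}
dv_\varepsilon=\bigl(\delta_\varepsilon(t)\Delta v_\varepsilon+f(t,\cdot+X'_t)\bigr)\,dt+g^k(t,\cdot+X'_t)\,dw^k_t,\qquad v_\varepsilon(0,\cdot)=u_0.
\end{equation*}
The It\^o--Wentzell formula applied to $z_\varepsilon(t,x):=v_\varepsilon(t,x-X'_t)$ shows that $z_\varepsilon$ satisfies the $u_\varepsilon$-equation modulo a $dW'$-martingale; taking conditional expectation over $\bar\rF_t$ and using uniqueness (Theorem \ref{thm 20181222}) gives $u_\varepsilon=\hat\bE[z_\varepsilon|\bar\rF_t]$. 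Conditional Jensen plus translation-invariance of the $L_p$-norm therefore yield $\|u_{\varepsilon,xx}\|_{\bL_p(\tau,\delta_\varepsilon)}\leq\|v_{\varepsilon,xx}\|_{\hat\bL_p(\tau,\delta_\varepsilon)}$. Next, I make the pathwise random time change $s=\beta_\varepsilon(t):=\int_0^t\delta_\varepsilon(r)\,dr$ (strictly increasing, since $\delta_\varepsilon\geq\varepsilon$) with inverse $\alpha_\varepsilon$; writing $\tilde v(s,x):=v_\varepsilon(\alpha_\varepsilon(s),x)$, the equation becomes the standard heat SPDE
\begin{equation*}
d\tilde v=(\Delta\tilde v+\tilde f)\,ds+\tilde g^k\,d\tilde w^k_s,
\end{equation*}
where $\tilde f(s,x):=\delta_\varepsilon(\alpha_\varepsilon(s))^{-1}f(\alpha_\varepsilon(s),x+X'_{\alpha_\varepsilon(s)})$, $\tilde g^k(s,x):=\delta_\varepsilon(\alpha_\varepsilon(s))^{-1/2}g^k(\alpha_\varepsilon(s),x+X'_{\alpha_\varepsilon(s)})$, and the $\tilde w^k$ are the independent Brownian motions produced by the Dambis--Dubins--Schwarz representation of the time-changed martingales. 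Lemma \ref{lemma 4.26.1} applied with $\kappa=M=1$, combined with the substitution $ds=\delta_\varepsilon(t)\,dt$ and translation-invariance of $\|\cdot\|_{L_p}$, delivers
\begin{equation*}
\|v_{\varepsilon,xx}\|_{\hat\bL_p(\tau,\delta_\varepsilon)}\leq N(d,p)\Bigl(\|u_0\|_{\dot\bB_p^{2(1-1/p)}}+\|f\|_{\bL_p(\tau,\delta_\varepsilon^{1-p})}+\|g_x\|_{\bL_p(\tau,\delta_\varepsilon^{1-p/2},l_2)}\Bigr).
\end{equation*}

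To conclude, I send $\varepsilon\searrow0$. Since $1-p<0$ and $1-p/2\leq0$ for $p\geq2$, both $\delta_\varepsilon^{1-p}$ and $\delta_\varepsilon^{1-p/2}$ increase monotonically to $\delta^{1-p}$ and $\delta^{1-p/2}$ respectively, so the right-hand side converges to the right-hand side of \eqref{2019011110} by monotone convergence. On the left, the uniform bound produces weak compactness of $\{u_{\varepsilon,xx}\}$ in the reflexive weighted space $L_p(\Omega\times[0,\tau],\delta\,dP\,dt;L_p)$; together with the convergence $u_\varepsilon\to u$ in $L_p(\Omega;C([0,\tau];L_p))$ (which follows from the solution formula in Remark \ref{2019041730}(i) and the pointwise convergence $\bar\sigma_\varepsilon\to\bar\sigma$), the weak limit must be $u_{xx}$, and weak lower semicontinuity of the norm yields \eqref{2019011110}. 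The main technical obstacle is the rigorous execution of the random time change for the stochastic integral: constructing the enlarged filtered probability space, applying Dambis--Dubins--Schwarz to produce the Brownian motions $\tilde w^k$ (all driven by the same scalar time change, so that independence is preserved), verifying predictability of the transformed data, and confirming that the hypotheses of Lemma \ref{lemma 4.26.1} are satisfied in this new filtered setting.
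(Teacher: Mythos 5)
Your overall strategy --- shift by an auxiliary Brownian motion built from $\bar\sigma_\varepsilon=\sqrt{2(A_\varepsilon-\delta_\varepsilon I)}$, take conditional expectations to come back to the original filtration, then a random time change reducing to the heat equation so that Lemma \ref{lemma 4.26.1} applies --- is a legitimate variant of the paper's argument. The paper orders the reductions differently: it keeps the given solution $u$ throughout, performs only the time change (so the transformed coefficients $\tilde a^{ij}(t)=a^{ij}(\psi(t))/\delta(\psi(t))$ are uniformly elliptic with constant $1$ but \emph{unbounded}), and invokes Lemma \ref{lemma classic}, which was proved precisely to dispense with boundedness; you instead re-run the shift argument from the proof of Lemma \ref{lemma classic} and then normalize by the time change, so that only the bounded-coefficient Lemma \ref{lemma 4.26.1} is needed. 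Either ordering is workable, and your bookkeeping of the weights under the time change ($\|\tilde f\|_{\bL_p(\beta_\varepsilon(\tau))}=\|f\|_{\bL_p(\tau,\delta_\varepsilon^{1-p})}$, and similarly for $\tilde g_x$ and for $\|v_{\varepsilon,xx}\|_{\hat\bL_p(\tau,\delta_\varepsilon)}$) is correct.

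The genuine gap is the limit passage $\varepsilon\downarrow 0$. The lemma is an a priori estimate for a \emph{given} solution $u$ of \eqref{linear eqn}; you estimate instead the solutions $u_\varepsilon$ of a different (regularized) equation, and you must then prove $u_\varepsilon\to u$ strongly enough to identify the weak limit of $u_{\varepsilon,xx}$ with $u_{xx}$. Your justification via Remark \ref{2019041730}(i) and the pointwise convergence $\bar\sigma_\varepsilon\to\bar\sigma$ does not carry this: that remark is only sketched in the paper (and explicitly unused there), it requires $g\in\bH^2_p(\tau,l_2)$, and continuity of the representation in $\varepsilon$ involves simultaneously the convergence in law of the Gaussian processes $X'_{\cdot,\omega}$ (whose covariance involves $A_\varepsilon$) and of $y_\omega=a^{ij}_\varepsilon(\cdot)\int_0^\cdot g^k_{x^ix^j}\,dw^k$; none of this is established. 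Nor can you apply the a priori estimate \eqref{2018122210} to $u-u_\varepsilon$ directly, since the free term of the difference equation is $-\varepsilon\Delta u$ and $u_{xx}\in\bL_p$ is not among your hypotheses at that point. The paper avoids the issue entirely: it first mollifies the given $u$ in $x$ and localizes with stopping times (its Step 3, which also supplies the qualitative integrability $u_{xx}\in\bL_p(\tau_n,\delta)\cap\bL_p(\tau_n)$ that your weak-compactness and identification step silently needs, because only $\int_0^\tau\delta\,dt<\infty$ a.s., not in expectation, is known), and then observes that the \emph{same} $u$ solves the $a^{ij}_\varepsilon$-equation with $f$ replaced by $f-\varepsilon\Delta u$, so the regularization enters only through the forcing and is killed by $\varepsilon^p\delta_\varepsilon^{1-p}\le\varepsilon$ as in \eqref{2019011101}. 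Your proof can be repaired by adopting exactly this device (after your smooth-data reduction $\Delta u$ does lie in $\bL_p(\tau\wedge T)$, since the coefficients are $x$-independent), but as written the convergence $u_\varepsilon\to u$ and the identification of the weak limit are unproven. A smaller inaccuracy: Lemma \ref{lemma classic} is an a priori estimate, not an existence or regularity statement, so the smoothness of $u_\varepsilon$ should be obtained from Theorem \ref{thm 20181222} applied to the spatial derivatives of the equation, not from that lemma.
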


\begin{proof}

{\bf{Step 1}}. First we assume 
$$
u_{xx} \in \bL_p( \tau ,\delta)\cap \bL_p(\tau),
$$ 
and there exists a positive constant $\varepsilon \in (0,1]$ such that

\begin{align}
						\label{2019011050}
\delta(t)\geq \varepsilon>0 \qquad \forall t, \omega.
\end{align}
For $t>0$, denote
$$
\beta(t)=\int^t_0 \delta(s)ds,
$$
and let $\psi(t)$ be the inverse of $\beta(t)$.  Then
$$
\psi(\beta(t))=t
$$
and thus
\begin{align}
						\label{2019020401}
\psi'(\beta(t) ) \beta'(t) = \psi'(\beta(t)) \delta(t)=1, \quad \forall (\omega,t).
\end{align}
Since for each fixed $\omega \in \Omega$, $\beta(t)$ is a strictly increasing  continuous function with respect to $t$, we have
$$
\psi(t)=\inf \{s \in [0,\infty) : \beta(s)>t\}.
$$
Thus for each $\omega$, $\psi(t)$ is a strictly increasing continuous function with respect to $t$ and 
$$
\beta(t)=\inf \{s \in[0,\infty) : \psi(s)>t\}.
$$
In particular, both $\psi(t)$ and $\beta(t)$ are stopping times. 
Define
$$
\tilde{\rF}_t:=\rF_{\psi(t)}, \quad m^k_t=w^k_{\psi(t)}.
$$
Then $m^k_t$ is a square integrable  continuous martingale relative to $\tilde{\rF}_t$ such that
$$
[m^k]_t=\psi(t), \quad d[m^k]_t=\psi'(t)dt=\frac{1}{\delta(\psi(t)) }dt.
$$
Thus there exist  $\tilde \rF_t$-adapted independent Wiener processes $\tilde{w}^k_t$ such that  
$$
 m^k_t:=w^k_{\psi(t)}=\int_0^t  1/ \sqrt{\delta(\psi(s))} d\tilde w^k_s.
 $$
Recall that $u$ is a solution to \eqref{linear eqn} and consider the function  $v(t,x) :=u(\psi(t),x)$.
Then $v$ satisfies
 \begin{align*}
 dv(t,x)
&= \left(a^{ij}(\psi(t))u_{x^ix^j}(\psi(t),x)\psi'(t)+ f(\psi(t),x)\psi'(t)\right) dt + g^k(\psi(t),x) dw^k_{\psi(t)}\\
&= \left(\tilde{a}^{ij}(t)v_{x^ix^j} (t,x)  + \tilde{f}(t,x)\right) dt + \tilde{g}^k (t,x) d\tilde w^k_t, \quad \quad  0<t\leq \beta(\tau), 
 \end{align*}
with initial condition $v(0,x)=u_0$,  where 
\begin{align*}
 \tilde{a}^{ij}(t) =a^{ij}(\psi(t))\psi'(t)  =a^{ij}(\psi(t)) /\delta(\psi(t) ) ,
\end{align*}
 $$
 \tilde{f}(t,x)= f(\psi(t) ,x)  \psi'(t) = f(\psi(t) ,x)  / \delta(\psi(t)),
 $$
and
 $$
 \tilde{g}(t,x)= g(\psi(t) ,x)  \sqrt{\psi'(t)} = g(\psi(t) ,x)  / \sqrt{\delta(\psi(t))}.
 $$

Since $\delta(\psi(t))$ is the smallest eigenvalue of $a^{ij}( \psi(t) )$,
\begin{align*}
 \tilde{a}^{ij}(t) \xi^i \xi^j
 =a^{ij}(\psi(t)) \frac{1}{ \delta(\psi(t))} \xi^i \xi^j \geq |\xi|^2 \qquad  \forall \xi \in \fR^d,
\end{align*}
i.e. the ellipticity constant of the coefficients $\tilde a^{ij}(t)$ is 1. 
Thus by Lemma \ref{lemma classic}, a change of variables,  and \eqref{2019020401}, 
 \begin{align*}
 \|u_{xx}\|_{\bL_p( \tau,\delta)}
& = \|v_{xx}\|_{\bL_p(\beta(\tau))} \\
& \leq N(d,p) \left(  \|u_0\|_{\dot \bB_p^{2 \left(1-1/ p \right)}}  +  \| \tilde f\|_{\bL_p( \beta(\tau))}  + \| \tilde g_x\|_{\bL_p(\beta(\tau),l_2)} \right) \\
&=  N(d,p) \left(  \|u_0\|_{\dot \bB_p^{2 \left(1-1/ p \right)}}  +  \|  f\|_{\bL_p( \tau, \delta^{1-p} )}  + \|  g_x\|_{\bL_p( \tau, \delta^{1-p/2},l_2)} \right).
 \end{align*}

\smallskip
 {\bf{Step 2}}. Second, we only assume that
 \begin{align}
						\label{2019011310}
u_{xx} \in \bL_p( \tau ,\delta) \cap \bL_p(\tau).
\end{align}
 In other words, we remove condition \eqref{2019011050} in this step.
For $\varepsilon>0$, denote 
 $$
 a^{ij}_\varepsilon(t)=a^{ij}(t)+\varepsilon I, \quad \delta_{\varepsilon}(t):=\delta(t)+\varepsilon.
 $$
 Then $u$ satisfies
 \begin{align*}
& du= \left( a_\varepsilon^{ij}(t)u_{x^ix^j}+f -\varepsilon \Delta u  \right)dt + g^k dw^k_t, \quad 0<t\leq \tau,\\
&u(0,x)=u_0(x).
\end{align*}

 By Step 1 and  the inequalities that $\delta^{1-p}_{\varepsilon}\leq \delta^{1-p}$ and $\delta_\varepsilon^{1-p/2} \leq \delta^{1-p/2}$,
 \begin{align}
							\notag
& \bE\int^\tau_0 \|u_{xx}(t,\cdot)\|^p \delta_{\varepsilon}(t) dt\\
							\notag
& \leq N \|u_0\|_{\dot \bB_p^{2 \left(1-1/ p \right)}}  \\
&\qquad +N\bE \int^\tau_0\left(  \left\| \frac{1}{\delta_\varepsilon(t)} (f-\varepsilon \Delta u) (t,\cdot)\right\|^p_{L_p}+
							\notag
 \left\| \frac{1}{\sqrt{\delta_{\varepsilon}(t)}}  g_x  (t,\cdot)\right\|^p_{L_p}\right) \delta_{\varepsilon}(t)dt\\ 
							\notag
& \leq N \|u_0\|_{\dot \bB_p^{2 \left(1-1/ p \right)}}  \\
							\label{2019011101}
& \qquad + N\left( \|f\|^p_{\bL_p(\tau,\delta^{1-p})} + \|g\|^p_{\bL_p(\tau,\delta^{1-p/2},l_2)}
 +\bE \int^\tau_0\left(  \left\|   \Delta u (t,\cdot)\right\|^p_{L_p}\right)  \varepsilon^p \delta^{1-p}_{\varepsilon}(t)dt \right).
 \end{align}
Observe that 
$$
\varepsilon^p \delta^{1-p}_{\varepsilon}(t) \leq \varepsilon \leq 1
$$
and recall $u_{xx}\in  \bL_p( \tau) $.
Thus as $\varepsilon \downarrow 0$, we have
$$
\bE \int^\tau_0\left\|   \Delta u (t,\cdot)\right\|^p_{L_p}  \varepsilon^p \delta^{1-p}_{\varepsilon}(t)dt  \to 0.
$$

Therefore by Fatou's lemma and \eqref{2019011101}, we finally obtain  \eqref{2019011110}.

\smallskip
{\bf{Step 3}}. (General case). Finally we remove condition \eqref{2019011310} in this step.
Consider the mollification $u^\varepsilon(t,x)$ used in the proof of Theorem \ref{well thm}.
Then $u^\varepsilon$ satisfies 
\begin{align*}
& du^\varepsilon= \left( a^{ij}(t)u^\varepsilon_{x^ix^j} + f^\varepsilon(t,x ) \right)dt + (g^\varepsilon)^k(t, x) dw^k_t,  \quad  0<t \leq \tau, \\
&u^\varepsilon(0,x)=u^\varepsilon_0(x).
\end{align*}
Since $\delta(t)$ is the smallest eigenvalue of $a^{ij}(t)$,  by applying Young's convolution inequality and \eqref{2019012610},
\begin{align*}
\int_0^\tau \|u^\varepsilon_{x^ix^j}(s,\cdot)\|^p_{L_p} \delta(s) ds 
&\leq \int_0^ \tau \|u(s,\cdot)\|^p_{L_p} \|\phi^\varepsilon_{x^ix^j}\|^p_{L_1} \delta(s) ds \\
&\leq \sup_{s \leq \tau} \|u(s,\cdot)\|^p_{L_p} \|\phi^\varepsilon_{x^ix^j}\|^p_{L_1} \int_0^ \tau   a^{11}(s)ds  < \infty \quad (a.s.)
\end{align*}
for all $i$, $j$. Similarly,
$$
\|u^{\varepsilon}_{xx}\|_{\bL_p(\tau \wedge T)}<\infty
$$
for all $T>0$. 
Then denoting 
$$
\tau_n = \inf\left\{ s < \tau \wedge T :  \sum_{i,j}\int_0^s \|u^\varepsilon_{x^ix^j}(s,\cdot)\|^p_{L_p} \delta(s) ds  > n \right\},
$$
we have
$$
u^\varepsilon_{xx} \in \bL_p(  \tau_n ,\delta) \cap \bL_p(\tau_n).
$$
and $\lim_{n \to \infty}\tau_n = \tau$ $(a.s.)$. 
Thus by Step 2, for all $n \in \bN$, $\varepsilon_1, \varepsilon_2>0$,
\begin{align*}
&\|u^{\varepsilon_1}_{xx}  -u^{\varepsilon_2}_{xx} \|_{\bL_p(\tau_n ,\delta)}    \\
&\leq N(d,p) \left(  \|u_0^{\varepsilon_1}- u_0^{\varepsilon_2}\|_{\dot \bB_p^{2 \left(1-1/ p \right)}}  +  \|  f^{\varepsilon_1} - f^{\varepsilon_2}\|_{\bL_p( \tau,\delta^{1-p} )}  + \|  g^{\varepsilon_1}_x   -  g^{\varepsilon_2}_x\|_{\bL_p( \tau,\delta^{1-p/2},l_2)} \right),
\end{align*}
and
\begin{align*}
\|u^{\varepsilon_1}_{xx}\|_{\bL_p(\tau_n ,\delta)}  \leq N(d,p) \left(  \|u_0\|_{\dot \bB_p^{2 \left(1-1/ p \right)}}  +  \|  f\|_{\bL_p( \tau,\delta^{1-p} )}  + \|  g_x\|_{\bL_p( \tau,\delta^{1-p/2},l_2)} \right).
\end{align*}
Finally by Fatou's lemma and the approximation argument, we obtain \eqref{2019011110}.
The lemma is proved.
 \end{proof}

\mysection{Proof of Theorem \ref{main thm}}
								\label{pf main thm}

Since $(1-\Delta)^{\gamma/2}$ is an isometry both on Sobolev spaces and Besov spaces, 
we may assume that $\gamma=0$.

First observe that for any $\phi \in C_c^\infty(\fR^d)$ and $u \in L_p \left( \Omega, \rF ; C\left( [0,\tau ] ; L_p \right) \right)$,
\begin{align*}
&\int_0^\tau \sum_{k=1}^\infty \left|\sigma^{ik}(t) \int_{\fR^d}   u(t,x) \phi_{x^i}(x) dx \right|^2dt \\
&\leq  N(d)\int_0^\tau |\sigma|^2 (t)  \| u(t, \cdot)  \|^2_{L_p} \| \phi_x\|^2_{L_q}  dt \\
&\leq  N(d)\| \phi_x \|^2_{L_q}  \int_0^\tau |\sigma(t)|^2dt  \left( \sup_{t \in [0,\tau]} \| u(t, \cdot)  \|_{L_p(l_2)} \right)^2 < \infty \qquad (a.s.),
\end{align*}
where $q= \frac{p}{p-1}$.
Thus 
$$
 \int_0^t \sigma^{ik} (u_{x^i} ,\phi) dw_t^k
:=  -\int_0^t \sigma^{ik} (u,\phi_{x^i} ) dw_t^k
 $$
 is well-defined. 
Denote
 $$x_t^i = \int_0^t \sigma^{ik}(s)dw_s^k, \quad x_t=(x^1_t, \ldots, x_t^d).
 $$
By the It\^o-Wentzell formula (cf. \cite[Theorem 1.1]{Krylov2011}),
 $u(t,x)$ is a solution to \eqref{main eqn} if and only if $v(t,x)=u(t,x-x_t)$ is a solution to the equation
\begin{align}
							\notag
dv=& \left( \alpha^{ij}(t)v_{x^ix^j} + f(t,x - x_t) - g^k_{x^i}(t,x- x_t) \sigma^{ik}(t)  \right)dt \\
&+ g^k(t, x-x_t) dw^k_t, \quad 0<t\leq \tau; \quad v(0,x)=u_0.
							\label{2019011020}
\end{align}
By the assumption that $g_x \in \bL_p( \tau , |\sigma|^p ,l_2) \cap \bL_p( \tau , |\sigma|^p \delta^{1-p} , l_2)$,
\begin{align*}
&\bE \int_0^\tau  \int_{\fR^d}| g^k_{x^i}(t,x- x_t) \sigma^{ik}(t)|^p dx dt + \bE \int_0^\tau  \int_{\fR^d}| g^k_{x^i}(t,x- x_t) \sigma^{ik}(t)|^p \delta^{1-p} dx dt \\
&\leq \bE \int_0^\tau  \int_{\fR^d}  |g_x|_{l_2}^p(t,x)  dx |\sigma(t)|^p dt  + \bE \int_0^\tau  \int_{\fR^d}  |g_x|_{l_2}^p(t,x)  dx |\sigma(t)|^p \delta^{1-p} dt 
<\infty.
\end{align*}
Thus 
$$
g^k_{x^i}(t,x- x_t) \sigma^{ik}(t)  \in \bL_p(\tau) \cap \bL_p(\tau, \delta^{1-p}),
$$
and by Theorem \ref{thm 20181222} and Lemma \ref{2019011301}, there exists a unique solution $v$ to \eqref{2019011020} such that 
$$
v \in L_p \left( \Omega, \rF ; C\left( [0, \tau] ; L_p \right) \right),
$$
\begin{align*}
\bE \sup_{t  \in [0, \tau] } \|v(t,\cdot)\|^p_{L_p}
\leq N(p,T) \left(\|f\|^p_{\bL_p(\tau)} 
+ \|g\|^p_{\bL_p(\tau,l_2)}+\|g_x\|^p_{\bL_p( \tau, |\sigma|^p,l_2)} +\bE\|u_0\|^p_{L_p} \right),
\end{align*}
and
\begin{align*}
&\|v_{xx}\|_{\bL_p(\tau,\delta)}   \\
&\leq N(d,p) \left(  \|u_0\|_{\bB_p^{2 \left(1-1/ p \right)}}  +  \|  f\|_{\bL_p( \tau,\delta^{1-p} )} +\|g_x\|_{\bL_p( \tau, |\sigma|^p \delta^{1-p},l_2)}  + \|  g_x\|_{\bL_p( \tau,\delta^{1-p/2},l_2)} \right).
\end{align*}
Therefore, 
$$
u(t,x) := v(t,x+x_t) \in L_p \left( \Omega, \rF ; C\left( [0,\tau] ; L_p \right) \right)
$$
becomes a unique solution to equation \eqref{main eqn} and  satisfies \eqref{2019011031} and \eqref{2019011320}.
The theorem is proved. \qed

\end{document}